\documentclass[a4paper]{article}
\usepackage{graphicx}
\usepackage{latexsym}
\usepackage{epsfig,epstopdf}
\usepackage{amsmath,amssymb,amsfonts}
 \usepackage{graphicx,color}
\usepackage{float}
\renewcommand{\baselinestretch}{1.2}
\textwidth = 15 cm \textheight = 20 cm \oddsidemargin = 0 cm
\evensidemargin = 0 cm \topmargin = 0.5 cm
\parskip = 2.5 mm

\newtheorem{prethm}{{\bf Theorem}}

\newenvironment{thm}{\begin{prethm}{\hspace{-0.5
               em}{\bf.}}}{\end{prethm}}

\newtheorem{prepro}[prethm]{Proposition}
\newenvironment{pro}{\begin{prepro}{\hspace{-0.5
               em}{\bf.}}}{\end{prepro}}

\newtheorem{prelem}[prethm]{Lemma}

\newtheorem{precor}[prethm]{Corollary}
\newenvironment{cor}{\begin{precor}{\hspace{-0.5
               em}{\bf.}}}{\end{precor}}
\newtheorem{precon}[prethm]{Conjecture}
\newenvironment{con}{\begin{precon}{\hspace{-0.5
               em}{\bf.}}}{\end{precon}}

\newtheorem{preremark}{{\bf Remark}}

\newenvironment{rem}{\begin{preremark}\em{\hspace{-0.5
              em}{\bf.}}}{\end{preremark}}

\newtheorem{preexample}{{\bf Example}}

\newtheorem{preproblem}{{\bf problem}}

\newtheorem{preproof}{{\bf Proof.}}

\newenvironment{proof}[1]{\begin{preproof}{\rm
               #1}\hfill{$\Box$}}{\end{preproof}}

\newcommand{\1}{\mathbf{1}}
\renewcommand{\thefootnote}

\begin{document}
\title{Upper bounds on the number of perfect matchings\\ 
and directed 2-factors in graphs with given number of vertices and edges}
\author{{\normalsize{\sc M. Aaghabali ${}^{3}$}, {\sc S. Akbari
${}^{1,4}$},\, {\sc S. Friedland ${}^{2}$},  {\sc K. Markstr\"om ${}^{5}$},
{\sc Z. Tajfirouz ${}^{3},$}}\\{\footnotesize{${}^{1}$\it Department of
Mathematical Sciences, Sharif University of Technology,Teheran, Iran}}\\
{\footnotesize}{\footnotesize{${}^{2}$\it Department of Mathematics, Statistics and Computer Science, University of Illinois at Chicago, USA }}\\
{\footnotesize}{\footnotesize{${}^{5}$\it Department of Mathematics and Mathematical Statistics, Ume{\aa} University,  Sweden }}\\
{\footnotesize}{\footnotesize{${}^{3}$\it Department of Mathematics, University of Zanjan, Iran}}\\{\footnotesize} {\footnotesize{${}^{4}$\it Institute
for Studies in Theoretical Physics and Mathematics
{\rm(IPM)}}}\vspace{-2mm}\\{\footnotesize{\it Tehran, Iran}}
\\{\footnotesize{$\mathsf{maghabali@znu.ac.ir}$\quad\quad
$\mathsf{s\_akbari@sharif.edu}$\quad\quad$\mathsf{friedlan@uic.edu
}$}\,\,\quad}\\{\footnotesize
$\mathsf{ klas.markstrom@math.umu.se}$\,\,\quad}
{\footnotesize
$\mathsf{z.tajfirouz@yahoo.com}$\,\,\quad}
\thanks {The third author was supported by NSF grant DMS-1216393.
}}
\date{}
\maketitle
\begin{quote}
{\small \hfill{\rule{13.3cm}{.1mm}\hskip2cm} \textbf{Abstract}
	 We give an upper bound on the number of perfect matchings in simple graphs with a given number of vertices and edges.
 	We apply this result to give an upper bound on the number of 2-factors in a directed complete bipartite balanced graph
 	on $2n$ vertices.  The upper bound is sharp for $n$ even.  For $n$ odd we state a conjecture on a sharp upper bound.
\vspace{1mm} {\renewcommand{\baselinestretch}{1}
\parskip = 0 mm

\noindent{\small {\it 2010 Mathematics Subject Classification}: 05A20, 05C20, 05C70. }}

\noindent{\small {\it Keywords}: Permanent, Tournament, Perfect matching.}}

\vspace{-3mm}\hfill{\rule{13.3cm}{.1mm}\hskip2cm}
\end{quote}
\section{\bf  Introduction}
 Let $G=(V,E)$ be a simple graph with adjacency matrix $A$. We assume that $m:=|E|$ and $2n:=|V|$ is even.
 Let ${\rm deg}(v)$ be the degree of the vertex $v\in V$. We denote by ${\rm perfmat}\; G$ the number of perfect matchings in $G$.
 Recall the following upper bound for the number of perfect matchings in $G$, see e.g.  \cite{AF08} for a proof,
 \begin{equation}\label{afubnd}
	 {\rm perfmat}\; G\le \prod_{v\in V} ({\rm deg}(v)!)^{\frac{1}{2{\rm deg}(v)}}.
 \end{equation}
 Equality holds if and only if $G$ is a disjoint union of complete regular bipartite graphs.
 Denote by $K_{n,n}$ the complete bipartite graph on $2n$ vertices.

 The first major result of this paper is to give a sharp upper bound of the right-hand side of the above inequality.
 Assume that $m\ge 2n\ge 2$.  Denote
 \begin{equation}\label{defomegmn}
	 \omega(2n,m):=(\lfloor \frac{m}{n}\rfloor !)^{\frac{n-\alpha}{\lfloor \frac{m}{n}\rfloor }}
  (\lceil \frac{m}{n}\rceil !)^{\frac{\alpha}{\lceil \frac{m}{n}\rceil }}, \; \alpha:=m-n\lfloor \frac{m}{n}\rfloor.
 \end{equation}
 (Where  $0!=0$. )   Denote by $\mathcal{G}(2n,m)$ the set of all simple graphs with $2n$ vertices and $m$ edges.
 We show that
 \begin{equation}\label{propommn}
	 \prod_{v\in V} ({\rm deg}(v)!)^{\frac{1}{2{\rm deg}(v)}} \le \omega(2n,m) \text{ ~for each }~ G\in \mathcal{G}(2n,m).
 \end{equation}
 Equality holds if and only if $G$ is almost regular.
 \begin{equation}\label{balcond}
	 |\deg(u)-\deg(v)|\le 1 \text{~ for all }~ u,v\in V,
 \end{equation}
 i.e. there exists a positive integer $k$ such that the degree of each vertex of $G$ is $k$ or $k-1$.
 Hence
 \begin{equation}\label{omegubnd}
	 {\rm perfmat}\; G\le \omega(2n,m) \text{ ~for each }~ G\in \mathcal{G}(2n,m).
 \end{equation}
 Equality holds if and only if and only if $G=G^{\star}_{2n,m}$, where $G^\star_{2n,m}$  is a disjoint union of $\ell_1 \ge 1$ copies of
 $K_{n_1,n_1}$ and $\ell_2\ge 0$ copies of $K_{n_1+1,n_1+1}$.  (So $G^{\star}=\ell_1 K_{n_1,n_1}\cup \ell_2 K_{n_1+1,n_1+1}$ 
 and  $n=\ell_1n_1+\ell_2(n_1+1)$ and $m= \ell_1n_1^2+\ell_2(n_1+1)^2$.)

 The second major result of this paper deals with an upper bound on the permanent of tournament matrices corresponding to $K_{n,n}$.
 Let $\mathcal{D}_{n}$ be the set of all digraphs  obtained from $K_{n,n}$ by assigning a direction to each edge.
 For each $D\in \mathcal{D}_{n}$ let $A_D$ be the $(0,1)$ adjacency matrix of order $2n$.  Then  ${\rm per} (A_D)$ is the number of
 directed 2-factors of $D$.  We show that
 \begin{equation}\label{tournupbds}
	 {\rm per} (A_D)\le \omega^2(4n,n^2) \text{~ for~ each }~ D\in\mathcal{D}_n.
 \end{equation}
 For $n$ even the above inequality is sharp.  For an odd integer we conjecture the sharp inequality 
 \begin{equation}\label{tournupbdsconj}
	 {\rm per} (A_D)\le  \frac{n+1}{2} D_{ \frac{n+1}{2}}(\frac{n-1}{2})!^3 \text{ ~for odd}~ n \ne 5  \text{~ and each }~  D\in\mathcal{D}_n.
 \end{equation}
 Here $D_p$ stands for the number of derangements of $\{1,\ldots,p\}$.  We give some computational support for this conjecture.

We now summarize briefly the contents of the paper. In \S2  we give some preliminary results containing basic definitions, notation and estimates for 
the number of perfect matchings in an arbitrary graph. We present some lower and upper bounds for the maximal number of $2$-factors of tournaments. 
In \S3 we prove a key inequality of our paper.  In \S4 we apply this inequality to get an upper bound for the number of perfect matchings in a graph with 
a given number of vertices and edges.  Also, for even $n$,  we obtain an orientation for the edges of the complete bipartite graph $K_{n,n}$ in such a 
way that it has a maximum number of $2$-factors.  For odd $n$ we conjecture that the maximum number of $2$-factors occurs in an explicitly 
given special complete bipartite digraphs. In the last section  we state some computational results for the permanent of tournaments of order up to $10$ 
and the number of perfect matchings in graphs with a small number of vertices, or with the number of edges close to the half of number of vertices.

\section{\bf  Preliminary results}

By the \textit{permanent} of an $n\times n$ matrix $A=[a_{ij}]$ over a commutative ring we mean  $${\rm per} (A)=
\sum_\sigma a_{1\sigma(1)}a_{2\sigma(2)}\dots a_{n\sigma(n)},$$
 where the summation is over all permutations of  $\{1,\dots,n\}$.

 Suppose that $D=(V,Arc)$ is a simple directed graph, where $V=\{v_1,\dots,v_n\}$.
 (That is each directed edge $(v_i,v_j)$ appears at most once.  We allow loops $(v_i,v_i)$.)
 Let ${\rm deg}^+(v)$ and ${\rm deg}^-(v)$ denote the outdegree and indegree of the vertex $v\in V$, respectively. The \textit{adjacency matrix} of 
$D$ is an $n\times n$ matrix $A_D=[a_{ij}]_{i,j=1}^n$ indexed by the vertex set, where $a_{ij}=1$ when  $(v_i,v_j)\in Arc$ and $a_{ij}=0$, otherwise. 
A disjoint union of directed cycles in $D$ is called $2$\textit{-factor} if it covers all vertices of $D$.  We allow loops and $2$-cycles $v_i\to v_j\to v_i$ for $i<j$.
Then ${\rm per} (A_D)$ counts the number of $2$-factors of $D$.  Denote by ${\cal D}(n,m)$ the set of simple digraphs $D=(V, Arc)$ with $n=|V|$ vertices
 and $m=|Arc|$ directed edges.

Let $G=(A,E)$ be a simple graph.  Denote by $\mathcal{D}(G)$ the set of all directed graphs $D=(V,Arc)$ obtained from $G$ by orienting each edge of $E$. 
So $|Arc|=|E|$.  Each $A_D=[a_{ij}(D)], D\in \mathcal{D}(G)$ is a combinatorial 
skew symmetric matrix, i.e $a_{ij}(D)a_{ji}(D)=0$ for all $i,j\in [n]:=\{1,\ldots,n\}$.  Note that each $2$-factor of $D$ consists of cycles of length at least $3$.  
We denote $\mathcal{D}_n:=\mathcal{D}(K_{n,n})$.

 It is natural to ask for an assignment of directions to the edges of an undirected graph which results in a directed graph with the maximum number of  $2$-factors. 
The aim of this paper is to answer this question in some special cases.

 Let
  \begin{equation}\label{defmu2nm}
  \mu(2n,m):=\max_{G\in \mathcal{G}(2n,m)} {\rm perfmat}\; G ={\rm perfmat}\; G_{2n,m}^\star
  \end{equation}
  and
 \begin{equation}\label{defnu2nm}
  \nu(n,m):=\max_{D\in \mathcal{D}(n,m)} {\rm per}(A_D) ={\rm per}\; (A_{D^\star_{n,m}}),
  \end{equation}

   The first major result of this paper is the following inequality.
  \begin{equation}\label{muineq}
  \mu(2n,m)\le \omega(2n,m).
  \end{equation}
  Furthermore,  \eqref{muineq} is strict unless $\mathcal{G}(2n,m)$ contains $\ell_1K_{n_1,n_1}\cup \ell_2 K_{n_1+1,n_1+1}$.

  We say that $G=(V,E)$ is \emph{unbalanced} if there exist two vertices $u,v\in V$ such that ${\rm deg}(v)\geq{\rm deg}(u)+2$, and all neighbors of $u$
  that are different from $v$ are neighbors $v$.  Otherwise $G$ is called a \emph{balanced} graph.
  We show that each maximal $G^\star_{2n,m}$ can be chosen balanced.

  Let $k\in [n]:=\{1,\ldots,n\}$.
  Denote by $\mathcal{B}(2n,k)\subset \mathcal{G}$ the set of all simple $k$-regular bipartite graphs on $2n$ vertices.

  Clearly, $\mu(2n,n)=1$.  In that case \eqref{muineq} is sharp.
  Assume that
  \begin{equation}\label{mncond}
  1 < \frac{m}{n}\le n.
  \end{equation}
  Let $k:=\lceil \frac{m}{n}\rceil$ and $\alpha=m-n\lfloor \frac{m}{n}\rfloor$.
   Note that if $\alpha=0$ (or equivalently $\frac{m}{n}\in\mathbb{N}$), then any bipartite $k$-regular graph on $2n$ vertices has  $m$ edges, so  
 $\mathcal{B}(2n,k)\subset\mathcal{G}(2n,m)$. Thus consider  $H'\in \mathcal{B}(2n,k)$ and use the van der Waerden permanent inequality to yield
   \begin{equation}\label{mulowbnd1}
  {\rm perfmat}\; H'> (\frac{k}{e})^{n}.
  \end{equation}

  If $\alpha>0$ (or equivalently $ \frac{m}{n}\notin\mathbb{N}$), consider $H\in \mathcal{B}(2n,k)$. K\"onig's theorem yields that 
 each $H\in \mathcal{B}(2n,k)$ is $k$-edge colorable.  (Note that each color represents a perfect matching.)
  Color the edges of $H$ in $k$-colors and delete $n-\alpha$ edges colored in the first color to obtain a graph $G\in\mathcal{G}(2n,m)$.
 Then $G$ contains a subgraph $F\in \mathcal{B}(2n,k-1)$ which is colored in colors $2,\ldots,k$.
  Use the van der Waerden permanent inequality or its lower approximation \cite{Fri79} to deduce
  \begin{equation}\label{mulowbnd}
  {\rm perfmat}\; G \ge {\rm perfmat}\; F> (\frac{k-1}{e})^{n}.
  \end{equation}

  If $\frac{m}{n}$ is of order $n$ it is easy to show using Stirling's inequality
  \begin{equation}\label{stirineq}
  p!=\sqrt{2\pi p} (\frac{p}{e})^p e^{r_p}, \quad \frac{1}{12p+1}< r_p <\frac{1}{12p}
  \end{equation}
 that the above lower bound is of the same order  as the upper bound \eqref{muineq}.\\

Here we give the bipartite transformation of a directed graph which is extremely useful in our studies. Let $D=(V, Arc)$ be a directed graph, where 
 $V=\{v_1,\ldots,v_n\}$.
Denote by $B(D)$ the bipartite graph with partite sets $ V'=\{v_1',\ldots,v_n'\}, V''=\{v_1'',\ldots,v_n''\}$.
 Then $(v_i',v_j'')$ is an edge in $B(D)$,   if and only if $(v_i, v_j)$ is a directed edge in $D$.  Assume that 
$C=A_D$ is the adjacency matrix of $D$.  Then
$$S=\left[\begin{array}{ll}0&C\\C^T&0\end{array}\right],$$ 
is the adjacency matrix of $B(D)$. It is not hard to see that ${\rm per}(S)={\rm perfmat}^2(B(D))$.  So,
 \begin{equation}\label{perbr}
 {\rm per} (S)={\rm per}(C){\rm per} (C^T)={\rm  per}(A_D){\rm per} (A_D^T)={\rm per} ^2(A_D).
 \end{equation}
 Thus we deduce that the number of perfect matchings of $B(D)$ is equal to the number of $2$-factors of $D$.

Observe that ${\rm deg} (v_i')={\rm deg}^+(v_i), {\rm deg} (v_i'')={\rm deg}^-(v_i)$.
We say a directed graph $D$ is balanced if $B(D)$ is balanced.

Recall that  a $(0,1)$-matrix $T=[t_{ij}]_{i,j=1}^n$ is called a \textit{tournament matrix}  if $t_{ii}=0$ and $t_{ij}+t_{ji}=1$  for $i\ne j$. 
This means that $T$ is the adjacency matrix of a directed 
complete graph of order $n$, i.e a \textit{tournament} of order $n$. Thus  $B(T)\in\mathcal{G}(2n,\frac{n(n-1)}{2})$.  
Denote by $\mathcal{T}(n)\subset \mathcal{G}(2n,\frac{n(n-1)}{2})$ the set of all bipartite representations of tournaments of order $n$.

Let
\begin{equation}\label{deftau2nm}
	\tau(n):=\max_{G\in \mathcal{T}(n)} {\rm perfmat}\; G ={\rm perfmat}\; T_{n}^\star.
\end{equation}
Clearly $\tau(n)\le \mu(2n,\frac{n(n-1)}{2})$.   So we can use the upper bound \eqref{muineq}.
Note that here $\alpha=0$ if $n$ is odd and $\alpha=\lceil \frac{n-1}{2}\rceil=\frac{n}{2}$ if $n$ is even.
Suppose that $n$ is odd.  It is well known that there exists a tournament $T$ such that each player wins half of the games,
in which case  the corresponding bipartite graph is in $\mathcal{B}(2n,\frac{n-1}{2})$.  For this graph $B(T)$ one can use the lower permanent bound given in
[4, p.99].  Combine this observation with \eqref{muineq} to deduce
\begin{equation}\label{lowtaubd}
	\sqrt{e}(\frac{n-1}{2e})^n \leq\tau(n)\le ((\frac{n-1}{2})!)^{\frac{2n}{n-1}}  .
\end{equation}
Again, Stirling's inequality implies that the upper and the lower bounds for $\tau(n)$ are of the same order.

In the case of  even $n$ one can consider a tournament $T$ with vertex set $\{v_1,\dots,v_n\}$ such that for every $i\leq \frac{n}{2}$, 
$(v_i,v_{i+1}),\dots,(v_i,v_{i+\frac{n}{2}})\in Arc$ and for every $i>\frac{n}{2}$, $(v_i,v_{i+1}),\dots,(v_i,v_{i+\frac{n}{2}-1})\in Arc$, 
where indices are computed in mod $n.$ In this tournament for any $i\leq\frac{n}{2}$, ${\rm deg}^+(v_i)=\frac{n}{2}$, and for any 
$i>\frac{n}{2}$, ${\rm deg}^-(v_i)=\frac{n}{2}.$ Therefore, the corresponding bipartite graph $B(T)$ is an $(\frac{n}{2},\frac{n}{2}-1)$-
near regular graph in  which 
$${\rm deg}(v'_1)=\dots={\rm deg}(v'_{\frac{n}{2}})={\rm deg}(v''_{\frac{n}{2}+1})=\dots={\rm deg}(v''_n)=\frac{n}{2}$$
 and $${\rm deg}(v''_1)=\dots={\rm deg}(v''_{\frac{n}{2}})={\rm deg}(v'_{\frac{n}{2}+1})=\dots={\rm deg}(v'_n)=\frac{n}{2}-1.$$ 
From the construction of $T$ it is clear that $\{v'_1,v''_{\frac{n}{2}+1}\},\dots,\{v'_{\frac{n}{2}},v''_n\}$ are edges of $B(T)$. 
By deleting these edges we obtain a subgraph $F\in \mathcal{B}(2n,\lfloor \frac{n-1}{2}\rfloor)$. Again, using the lower permanent 
bound given in [4, p.99] we have
\begin{equation}\label{taulowbnd}
	{\rm perfmat}\; B(T) \ge {\rm perfmat}\; F\ge \sqrt{e}(\frac{n-2}{2e})^{n}.
\end{equation}
Use the upper bound \eqref{muineq} to obtain
\begin{equation}\label{lowtaubdeven}
	\sqrt{e}(\frac{n-2}{2e})^n \leq\tau(n)\le ((\frac{n-2}{2})!)^{\frac{2n-2}{n-2}}(\frac{n}{2}).
\end{equation}

 \section{A fundamental inequality}
 The following result is due to I.M. Wanless \cite{Wan99}.  (It is explicitly mentioned in the beginning of the proof of Lemma 1.)
 For reader's convenience we include its short proof.   
 \begin{thm}\label{fundineqthm}
 For $p\in\mathbb{N}$ the sequence $a_p:=\frac{(p!)^\frac{1}{p}}{(p+1)!^{\frac{1}{p+1}}}$ is a strictly increasing sequence.

\end{thm}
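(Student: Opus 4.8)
The plan is to prove the equivalent statement that $a_{p+1}>a_p$ for every $p\in\mathbb{N}$. Since all quantities are positive, cross-multiplying and taking (natural) logarithms shows this is the same as the strict midpoint concavity
\begin{equation*}
2f(p+1)>f(p)+f(p+2),\qquad f(p):=\tfrac{1}{p}\log(p!).
\end{equation*}
Writing $G_p:=(p!)^{1/p}$ for the geometric mean of $1,\dots,p$ (so that $f(p)=\log G_p$ and $a_p=G_p/G_{p+1}$), the desired inequality is exactly $G_{p+1}^2>G_pG_{p+2}$, i.e. $a_{p+1}/a_p=G_{p+1}^2/(G_pG_{p+2})>1$. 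So everything reduces to establishing this one concavity inequality for each $p$.

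Next I would turn the concavity inequality into an elementary one. Multiplying $2f(p+1)>f(p)+f(p+2)$ through by $p(p+1)(p+2)$ and substituting $\log(p!)=\log((p+1)!)-\log(p+1)$ together with $\log((p+2)!)=\log((p+1)!)+\log(p+2)$, the coefficients of $\log((p+1)!)$ collapse (from $2p(p+2)-2(p+1)^2=-2$), and after rearranging the inequality becomes
\begin{equation*}
2\sum_{k=1}^{p}\log\frac{p+1}{k}\;>\;p(p+1)\log\frac{p+2}{p+1}.
\end{equation*}
Here I used that the $k=p+1$ term vanishes and that $(p+1)\log(p+1)-\log((p+1)!)=\sum_{k=1}^{p}\log\frac{p+1}{k}$. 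This is a clean inequality: no factorials on the right, and only a single logarithmic term there.

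To finish I would bound the two sides separately. For the right-hand side, $\log(1+x)<x$ gives $p(p+1)\log\!\left(1+\frac{1}{p+1}\right)<p(p+1)\cdot\frac{1}{p+1}=p$. For the left-hand side I would pair the index $k$ with $p+1-k$: since $k\mapsto p+1-k$ is a bijection of $\{1,\dots,p\}$,
\begin{equation*}
2\sum_{k=1}^{p}\log\frac{p+1}{k}=\sum_{k=1}^{p}\log\frac{(p+1)^2}{k(p+1-k)},
\end{equation*}
and AM--GM gives $k(p+1-k)\le\frac{(p+1)^2}{4}$, so each summand is at least $\log4$ and the whole left-hand side is at least $2p\log2$. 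Since $2\log2>1$, we get $2p\log2>p$, hence left-hand side $>p>$ right-hand side, which is the required strict inequality.

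The only genuinely creative step is the passage to the symmetric form and the pairing $k\leftrightarrow p+1-k$; once that is in place, the crude bounds $\log(1+x)<x$ and $\log2>\tfrac12$ already separate the two sides with room to spare, so strictness (and validity already at the smallest value $p=1$) is automatic and no delicate estimates are needed. The main obstacle is therefore simply to spot that the factorials can be dissolved into the telescoping sum $\sum_{k=1}^{p}\log\frac{p+1}{k}$, after which the AM--GM symmetrization does the rest.
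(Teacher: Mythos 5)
Your proof is correct, and it is essentially the paper's own argument written in logarithmic form: your pairwise AM--GM bound $k(p+1-k)\le\frac{(p+1)^2}{4}$, multiplied over $k=1,\dots,p$, is exactly the paper's AM--GM step $(p!)^{1/p}\le\frac{p+1}{2}$; your bound $\log(1+x)<x$ is the paper's $1+x\le e^x$; and your final comparison $2\log 2>1$ is the paper's $e\le 2^2$. So this is the same decomposition with the same three ingredients, presented additively (via midpoint concavity of $\frac{1}{p}\log(p!)$ and the telescoping sum) rather than multiplicatively.
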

\begin{proof}
 {We want to prove the following inequality
 \begin{equation}\label{fundineq}
 \frac{(p!)^\frac{1}{p}}{(p+1)!^{\frac{1}{p+1}}}<\frac{(p+1)!^\frac{1}{p+1}}{(p+2)!^{\frac{1}{p+2}}} \textrm{ for } p=1,2,\ldots .
 \end{equation}
Use the equality $(p+1)!=(p+1)p!$ and $(p+2)!=(p+2)(p+1)p!$ we find that

$$p!^{\frac{2}{p(p+1)(p+2)}}\leq \frac{(p+1)^{\frac{p+3}{(p+1)(p+2)}}}{(p+2)^{\frac{1}{p+2}}}$$
This implies that $$(p!)^2(p+2)^{p(p+1)}\leq (p+1)^{p(p+3)}$$
By the arithmetic-geometric mean inequality we have $$\sqrt[p]{p!}\leq \frac{1+\dots+p}{p}=\frac{p+1}{2}$$
Thus it is suffices to prove that $$\frac{p+2}{p+1}\leq 2^{\frac{2}{p+1}}$$
Clearly, for $x\geq 0,~~~~~~1+x\leq e^x$. If $x=\frac{1}{p+1}$, then we have $\frac{p+2}{p+1}\leq e^{\frac{1}{p+1}}\leq 2^{\frac{2}{p+1}}$ and the proof is complete.}
\end{proof}
 \begin{cor}\label{fundineqlem}
  Let $p,q$ be nonnegative integers.  If $p\le q-2$ then
 \begin{equation}\label{fundineq3}
 (p!)^{\frac{1}{p}}(q!)^{\frac{1}{q}} <((p+1)!)^{\frac{1}{p+1}}((q-1)!)^{\frac{1}{q-1}}.
 \end{equation}
 In particular
 \begin{equation}\label{fundineq4}
 (p!)^{\frac{1}{p}}(q!)^{\frac{1}{q}} \le (\lfloor \frac{p+q}{2}\rfloor !)^{\frac{1}{\lfloor \frac{p+q}{2}\rfloor}}
 (\lceil \frac{p+q}{2}\rceil !)^{\frac{1}{\lceil \frac{p+q}{2}\rceil}}.
 \end{equation}
 Equality holds if and only if the multiset $\{p,q\}$ equals to $\{\lfloor \frac{p+q}{2}\rfloor,\lceil \frac{p+q}{2}\rceil \}$.
 \end{cor}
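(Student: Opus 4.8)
The plan is to deduce both inequalities directly from the strict monotonicity of the sequence $(a_p)$ established in Theorem~\ref{fundineqthm}. Write $f(p):=(p!)^{1/p}$ for $p\ge 1$, and $f(0):=0$ in accordance with the convention $0!=0$; then $a_p=f(p)/f(p+1)$ is exactly the ratio of the theorem, and every $f(p)$ is positive for $p\ge 1$. First I would observe that, since the relevant $f$-values are positive, cross-multiplication turns \eqref{fundineq3} into the equivalent statement
$$\frac{f(p)}{f(p+1)}<\frac{f(q-1)}{f(q)},\qquad \text{that is,}\qquad a_p<a_{q-1}.$$
The hypothesis $p\le q-2$ gives $p<q-1$, and since $p\ge 1$ both indices lie in $\mathbb{N}$; hence the strict monotonicity of $(a_p)$ from Theorem~\ref{fundineqthm} yields $a_p<a_{q-1}$ at once, proving \eqref{fundineq3} in the main range. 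The boundary case $p=0$ is handled separately: here the left-hand side of \eqref{fundineq3} vanishes while the right-hand side is positive (as $q\ge 2$ forces $f(q-1)\ge f(1)=1$), so the strict inequality still holds.

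For the ``in particular'' statement \eqref{fundineq4} I would run a smoothing (majorization) argument. Assuming without loss of generality that $p\le q$ and setting $s:=p+q$, I repeatedly replace the pair $(p,q)$ by $(p+1,q-1)$ as long as $p\le q-2$. Each such step preserves the sum $s$ and, by \eqref{fundineq3}, strictly increases the product $f(p)f(q)$. The gap $q-p$ decreases by $2$ at every step, so the process terminates exactly when $q-p\le 1$, at which point the pair equals $\{\lfloor s/2\rfloor,\lceil s/2\rceil\}$; this gives the right-hand side of \eqref{fundineq4} and establishes the inequality.

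For the equality characterization I would note that every performed smoothing step increases the product strictly, so equality in \eqref{fundineq4} can hold only if no step is performed, i.e. only if $|p-q|\le 1$; conversely, when $\{p,q\}=\{\lfloor s/2\rfloor,\lceil s/2\rceil\}$ the two sides of \eqref{fundineq4} are literally identical. Hence equality holds precisely when the multiset $\{p,q\}$ already equals $\{\lfloor s/2\rfloor,\lceil s/2\rceil\}$.

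All the computations here are routine; the only genuine point is the algebraic reformulation of \eqref{fundineq3} as the monotonicity inequality $a_p<a_{q-1}$, after which everything follows from Theorem~\ref{fundineqthm}. The one place demanding care is the degenerate index $p=0$, where the factor $(p!)^{1/p}$ must be read through the convention $0!=0$ giving $f(0)=0$, rather than as a limit; I expect this bookkeeping, and not any substantive difficulty, to be the main thing to get right.
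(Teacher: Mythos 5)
Your proof is correct and takes essentially the same route the paper intends: the paper states this corollary without an explicit proof as an immediate consequence of Theorem~\ref{fundineqthm}, and your reformulation of \eqref{fundineq3} as the monotonicity statement $a_p<a_{q-1}$, followed by the smoothing iteration $(p,q)\mapsto(p+1,q-1)$ to obtain \eqref{fundineq4} and its equality case, is precisely that intended deduction. Your separate handling of the degenerate index $p=0$ under the convention $0!=0$ fills in a detail the paper leaves implicit.
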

 Apply the above corollary to deduce.
 \begin{thm}\label{fundprodin} Let $k\ge 2$ be an integer, and assume that $p_1,\ldots,p_k$ are positive integers.  Let
 $M=p_1+\cdots+p_k$.  Define 
 \begin{equation}\label{thetakm}
  \theta(k,M):=(\lfloor \frac{M}{k}\rfloor !)^{\frac{k-\beta}{\lfloor \frac{M}{k}\rfloor}}
  (\lceil \frac{M}{k}\rceil !)^{\frac{\beta}{\lceil \frac{M}{k}\rceil}}, \; \beta:=M-k\lfloor \frac{M}{k}\rfloor.
 \end{equation}
 Then 
 \begin{equation}\label{fundineq5}
 \prod_{i=1}^k (p_i!)^{\frac{1}{p_i}}\le \theta(k,M).
 \end{equation}
 Equality holds if and only if $|p_i-p_j|\le 1$ for all integers $i,j=1,\ldots,k$.
 \end{thm}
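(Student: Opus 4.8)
The plan is to treat \eqref{fundineq5} as a discrete extremal problem and solve it by a smoothing (exchange) argument, with Corollary \ref{fundineqlem} supplying the single exchange step. Fix $M$ and $k$ and consider the function $\Phi(p_1,\dots,p_k)=\prod_{i=1}^k(p_i!)^{1/p_i}$ ranging over all $k$-tuples of \emph{positive} integers with $p_1+\cdots+p_k=M$. Since there are only finitely many such tuples, $\Phi$ attains a maximum; let $(p_1,\dots,p_k)$ be a maximizer. My first step is to show that any maximizer is balanced, that is, $|p_i-p_j|\le 1$ for all $i,j$.

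To prove this I would argue by contradiction: suppose some pair satisfies $p_i\le p_j-2$. Applying the strict inequality \eqref{fundineq3} with $p=p_i$ and $q=p_j$ gives
$$(p_i!)^{1/p_i}(p_j!)^{1/p_j}<\big((p_i+1)!\big)^{1/(p_i+1)}\big((p_j-1)!\big)^{1/(p_j-1)}.$$
Replacing the pair $(p_i,p_j)$ by $(p_i+1,p_j-1)$ leaves the sum $M$ unchanged, keeps all entries positive (since $p_j-1\ge p_i+1\ge 2$), and, by the displayed inequality, strictly increases $\Phi$ while all other factors are untouched. This contradicts maximality, so every maximizer must satisfy $|p_i-p_j|\le 1$.

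The balance condition forces each $p_i$ to equal $\lfloor M/k\rfloor$ or $\lceil M/k\rceil$. Writing $a=\lfloor M/k\rfloor$ and letting $t$ be the number of indices with $p_i=a+1$, the constraint $\sum p_i=M$ gives $ka+t=M$, hence $t=\beta$ and $k-\beta$ entries equal $a$. Substituting into $\Phi$ yields exactly $\theta(k,M)$ as defined in \eqref{thetakm} (in the integral case $\beta=0$ the ceiling factor carries exponent $0$ and is harmless). Therefore the maximum value of $\Phi$ equals $\theta(k,M)$, which is precisely the asserted bound \eqref{fundineq5}.

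For the equality characterization, note that every balanced tuple is a permutation of the one just described and hence attains $\Phi=\theta(k,M)$, giving the ``if'' direction. Conversely, if some tuple is not balanced, the exchange step above produces another tuple with strictly larger $\Phi$, whose value is still at most $\theta(k,M)$; thus $\Phi<\theta(k,M)$ for unbalanced tuples, yielding the ``only if'' direction. I do not expect a genuine obstacle here: the entire analytic content is packaged in Corollary \ref{fundineqlem}, and what remains is the routine verification that a single balancing exchange strictly increases the product, together with the bookkeeping that pins down the balanced configuration as $\theta(k,M)$.
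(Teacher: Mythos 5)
Your proof is correct and follows exactly the route the paper intends: the paper gives no written-out argument for this theorem beyond the remark ``Apply the above corollary to deduce,'' and your smoothing/exchange argument via the strict inequality \eqref{fundineq3} of Corollary \ref{fundineqlem} is precisely that deduction, carried out in full detail (existence of a maximizer, balancing of any maximizer, and the bookkeeping identifying the balanced value with $\theta(k,M)$).
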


 \section{Main inequalities for matchings}
\noindent
\begin{thm}\label{matchthmin}
	Let $m$ and $n$ be two positive integers, such that $n\le m\le {2n \choose 2}$.  Assume that $\omega(2n,m)$ is defined by \eqref{defomegmn}.   
  Let $G=(V,E)\in {\cal G}(2n,m)$.  Then inequality \eqref{propommn} holds.  Equality holds if and only if $G$ is almost regular.  	
	Let $\mu(2n,m)$ be defined as in \eqref{defmu2nm}. Then $\mu(2n,m)\le \omega(2n,m)$. Equality holds if and only if $\mathcal{G}(2n,m)$ 
contains a graph $G^{\star}$ which is a disjoint union 
	of $\ell_1 \ge 1$ copies of $K_{n_1,n_1}$ and 	$\ell_2\ge 0$ copies of $K_{n_1+1,n_1+1}$.  I.e. 
$n=\ell_1 n_1+\ell_2(n_1+1), m=\ell_1n_1^2+\ell_2(n_1+1)^2$.
	Furthermore, for these values of $n$ and $m$ a maximal graph $G^\star_{2n,m}$ in \eqref{defmu2nm} is unique and equal to 
$\ell_1K_{n_1,n_1}\cup\ell_2 K_{n_1+1,n_1+1}$.
 \end{thm}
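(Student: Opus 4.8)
The plan is to deduce the whole statement from the fundamental product inequality \eqref{fundineq5} of Theorem \ref{fundprodin}, applied to the degree sequence, together with the bound \eqref{afubnd} and its equality clause. First I would prove \eqref{propommn}. Write $d_1,\dots,d_{2n}$ for the degrees of the $2n$ vertices; the handshake identity gives $\sum_{i=1}^{2n} d_i = 2m$. Assuming every $d_i$ is positive, I apply Theorem \ref{fundprodin} with $k=2n$, $p_i=d_i$, $M=2m$ to get $\prod_{i=1}^{2n}(d_i!)^{1/d_i}\le \theta(2n,2m)$. The single computation to record is that, since $\lfloor 2m/2n\rfloor=\lfloor m/n\rfloor$, $\lceil 2m/2n\rceil=\lceil m/n\rceil$ and $\beta=2m-2n\lfloor m/n\rfloor=2\alpha$, the quantity $\theta(2n,2m)$ in \eqref{thetakm} is exactly $\omega(2n,m)^2$ from \eqref{defomegmn}. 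Taking square roots converts the bound into $\prod_v (\deg(v)!)^{1/(2\deg(v))}\le\omega(2n,m)$, which is \eqref{propommn}. If some $d_i=0$, then under the convention $0!=0$ the left-hand side of \eqref{propommn} vanishes, while $\omega(2n,m)\ge 1>0$, so the inequality holds strictly; since $m\ge n$ prevents such a graph from being almost regular, nothing is lost by restricting to positive degrees.

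The equality analysis for \eqref{propommn} is then immediate from the equality clause of Theorem \ref{fundprodin}: equality $\prod(d_i!)^{1/d_i}=\theta(2n,2m)$ holds precisely when $|d_i-d_j|\le 1$ for all $i,j$, that is, exactly when $G$ satisfies \eqref{balcond}. Chaining \eqref{afubnd} with \eqref{propommn} gives ${\rm perfmat}\,G\le\omega(2n,m)$ for every $G\in\mathcal{G}(2n,m)$, and maximizing over $G$ yields $\mu(2n,m)\le\omega(2n,m)$ as in \eqref{defmu2nm}.

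The more delicate part, which I expect to be the main obstacle, is the equality characterization of $\mu(2n,m)=\omega(2n,m)$ and the uniqueness of the extremal graph. Here a graph attains ${\rm perfmat}\,G=\omega(2n,m)$ iff it forces equality simultaneously in \eqref{afubnd} and in \eqref{propommn}. Equality in \eqref{afubnd} forces $G$ to be a disjoint union of complete regular bipartite graphs $\bigsqcup_i K_{a_i,a_i}$ (so the vertices of the $i$-th block have degree $a_i$), while equality in \eqref{propommn} forces $\max_i a_i-\min_i a_i\le 1$. Setting $n_1:=\min_i a_i$, the block sizes lie in $\{n_1,n_1+1\}$, so $G=\ell_1K_{n_1,n_1}\cup\ell_2K_{n_1+1,n_1+1}$ with $\ell_1\ge 1$, $\ell_2\ge 0$; conversely any such graph that lies in $\mathcal{G}(2n,m)$ attains the bound. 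This gives $\mu(2n,m)=\omega(2n,m)$ iff $\mathcal{G}(2n,m)$ contains a graph of this form. For uniqueness I would observe that the average degree of such a $G$ equals $m/n$ and lies in $[n_1,n_1+1]$, which pins down $n_1=\lfloor m/n\rfloor$; the two linear constraints $\ell_1 n_1+\ell_2(n_1+1)=n$ and $\ell_1 n_1^2+\ell_2(n_1+1)^2=m$ then solve uniquely to $\ell_2=\alpha/(n_1+1)$ and $\ell_1=(n-\alpha)/n_1$, so the multiset of blocks, and hence $G^\star_{2n,m}$, is determined up to isomorphism. The real work is organizing the two equality conditions so that their conjunction yields exactly this two-block-size structure and checking that the block counts are forced; the inequality itself is a short corollary once the substitution $k=2n$, $M=2m$ and the identity $\theta(2n,2m)=\omega(2n,m)^2$ are in place.
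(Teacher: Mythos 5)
Your proposal is correct and follows essentially the same route as the paper: combining the Alon--Friedland bound \eqref{afubnd} with Theorem \ref{fundprodin} applied to the degree sequence (with $k=2n$, $M=2m$, and the identity $\theta(2n,2m)=\omega(2n,m)^2$), then reading off both equality characterizations from the equality clauses of those two results. In fact you are more careful than the paper's own proof, which leaves implicit the substitution into $\theta$, the degree-zero edge case, and the argument that $n_1,\ell_1,\ell_2$ are uniquely determined by $n$ and $m$.
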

 \begin{proof}
 {Let $G\in \mathcal{G}(2n,m)$. 
 Combine \eqref{afubnd} with Theorem \ref{fundprodin} to deduce \eqref{propommn}.  The equality case in Theorem  \ref{fundprodin} yields that equality
 in \eqref{propommn} holds if and only if $G$ is an almost regular graph.   
 The definition of $\mu(2n,m)$ and the inequality \eqref{propommn} yields the inequality $\mu(2n,m)\le \omega(2n,m)$.
 Equality holds if and only if there exists an almost regular graph $G^\star_{2n,m}$ for which equality holds in the Alon-Friedland  upper bound \eqref{afubnd}.
 So $G^\star_{2n,m}$ is a union of complete bipartite graphs.  Hence $G^\star_{2n,m}=\ell_1 K_{n_1,n_1}\cup\ell_2 K_{n_1+1,n_1+1}$.}
 \end{proof}
One might hope that for each integer $m$, $n\le m\le {2n\choose 2}$ each maximal graph is almost regular, however as we shall see 
in Section \ref{comp} this is not always the case.
 Let $\mathcal{BAL}(2n,m)$ denote the subset of $\mathcal{G}(2n,m)$ containing all balanced graphs with $2n$ vertices and $m$ edges.
 The following theorem implies that for each $n,m$ as above there exists a balanced maximal $G^\star_{2n,m}$.
 \begin{thm}\label{blangthm}
 Let $G\in\mathcal{G}(2n,m)$.  Then there exists $G_0\in\mathcal{BAL}(2n,m)$ such that ${\rm perfmat}\; G\le {\rm perfmat}\;G_0$.
 Suppose furthermore that $G$ is bipartite.  Then $G_0$ can be chosen bipartite.

 \end{thm}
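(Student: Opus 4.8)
The plan is to transform $G$ into $G_0$ by a sequence of local edge-swaps, each of which preserves the vertex and edge counts, does not decrease the number of perfect matchings, and strictly decreases the potential $\Phi(G):=\sum_{w\in V}\deg(w)^2$; since $\Phi$ is a nonnegative integer this process must halt, and it can only halt at a balanced graph. Concretely, if $G$ is unbalanced I fix $u,v$ with $\deg(v)\ge\deg(u)+2$ and $N(u)\setminus\{v\}\subseteq N(v)$, choose $w\in N(v)\setminus(N(u)\cup\{u\})$ — such $w$ exists since $|N(v)\setminus(N(u)\cup\{u\})|\ge \deg(v)-\deg(u)-1\ge 1$ — and set $G':=G-\{v,w\}+\{u,w\}$. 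Then $\deg(v)$ drops by one while $\deg(u)$ rises by one, so $\Phi(G')-\Phi(G)=2(\deg(u)-\deg(v)+1)\le -2$.

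The heart of the argument is the inequality ${\rm perfmat}\,G\le{\rm perfmat}\,G'$. I would split the perfect matchings of $G$ according to whether they avoid or use $\{v,w\}$. A matching $M$ of the first kind lies in $E(G)\setminus\{v,w\}\subseteq E(G')$, so it is a perfect matching of $G'$ that moreover avoids the new edge $\{u,w\}$, because $\{u,w\}\notin E(G)$. A matching of the second kind has the form $\{v,w\}\cup\{u,x\}\cup M''$, where $x$ is the $G$-partner of $u$; since $x\in N(u)\setminus\{v\}\subseteq N(v)$ and $x\neq w$, both $\{u,w\}$ and $\{v,x\}$ lie in $E(G')$, so $M^{*}:=\{u,w\}\cup\{v,x\}\cup M''$ is a perfect matching of $G'$ that uses $\{u,w\}$. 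The first assignment is the identity and the second is invertible (recover $x$ as the $G'$-partner of $v$), and their images are disjoint since they differ on whether $\{u,w\}$ is used; this yields an injection and hence the inequality. Iterating the swap proves the first (non-bipartite) assertion.

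For the bipartite refinement I would always try to pick the pair $u,v$ in the same part. We may assume the parts $X,Y$ satisfy $|X|=|Y|=n$, since otherwise ${\rm perfmat}\,G=0$ and any balanced bipartite graph serves as $G_0$. When $u,v$ lie in the same part, $w\in N(v)$ lies in the opposite part, so the inserted edge $\{u,w\}$ again crosses the bipartition and $G'$ stays bipartite; thus as long as a \emph{same-part} unbalanced pair exists the whole scheme runs inside bipartite graphs. It then remains to treat a bipartite $G$ that is unbalanced but has no same-part unbalanced pair. If $(u,v)$ is a cross-part unbalanced pair with $u\in X,\ v\in Y$, then $N(u)\setminus\{v\}\subseteq N(v)\subseteq X$ while $N(u)\subseteq Y$, forcing $N(u)\setminus\{v\}=\emptyset$ and hence $\deg(u)\le 1$. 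If $\deg(u)=0$, the absence of a same-part unbalanced pair in $X$ forces every vertex of $X$ to have degree $\le 1$, so $m\le n$; then $\sum_{Y}\deg=m\le n$ with $\deg(v)\ge 2$ leaves some vertex of $Y$ isolated, which together with $v$ is a same-part unbalanced pair — a contradiction.

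The one genuinely delicate case, and the main obstacle, is $\deg(u)=1$: a pendant $u$ with $N(u)=\{v\}$ and $\deg(v)\ge 3$, where the absence of same-part unbalanced pairs forces every neighbour of $v$ to have degree $\le 2$. Here the same-part swap is unavailable, and such a configuration can genuinely persist — one can build bipartite, same-part-balanced graphs carrying a pendant attached to a degree-$3$ vertex — so same-part swaps \emph{alone} do not suffice. My plan is to exploit that $u$ is matched to $v$ in every perfect matching, whence ${\rm perfmat}\,G={\rm perfmat}\,(G-u-v)$ and the $\deg(v)-1$ remaining edges at $v$ are never used. I would then pass to a bipartite representative attaining the maximum of ${\rm perfmat}$ and, among those, the least $\Phi$, and rule out the pendant configuration by reallocating an unused edge $\{v,y\}$ together with one compensating edge so as to keep the maximum while strictly lowering $\Phi$ (equivalently, by induction on the number of vertices applied to $G-u-v$ followed by a balance-restoring reattachment). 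The hard part will be guaranteeing that the reconstructed graph is \emph{simultaneously} bipartite, balanced, and still perfmat-maximal; the structural fact that all neighbours of $v$ have degree at most $2$ is the lever I would use to control this final step.
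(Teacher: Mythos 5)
Your completed portion is correct, and it is essentially the paper's own argument: the identical swap (delete $\{v,w\}$, insert $\{u,w\}$ for $w\in N(v)\setminus(N(u)\cup\{u\})$), the identical injection on perfect matchings split by whether $\{v,w\}$ is used, and a termination argument (you use $\Phi=\sum_w\deg(w)^2$, the paper uses strict majorization of degree sequences; both work). On the non-bipartite statement your write-up is in fact more careful than the paper's, because you correctly handle unbalanced pairs in which $u$ and $v$ are adjacent (the pendant configuration $N(u)=\{v\}$, $\deg(v)\ge \deg(u)+2$), whereas the paper's restatement ``each neighbor of $v_j$ is a neighbor of $v_i$'' silently drops the exception for $v$ from the definition and thus only covers non-adjacent pairs.

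The genuine gap is in the bipartite claim, and you have located it precisely but not closed it. Your case analysis correctly reduces the problem to a cross-part pendant pair: $u\in X$ with $N(u)=\{v\}$, $v\in Y$, $\deg(v)\ge 3$, all other neighbors of $v$ of degree at most $2$; here the swap is unusable since $w\in N(v)\subseteq X$ makes $\{u,w\}$ an intra-part edge, and (as you note) such configurations really do survive all same-part swaps. But your treatment of this case is a plan, not a proof: with $\Phi=\sum\deg^2$, deleting an unused edge $\{v,x'\}$ and adding a compensating cross edge $\{a,b\}$ changes $\Phi$ by $2(\deg(a)+\deg(b)-\deg(v)-\deg(x'))+4$, which is negative only if $\deg(a)+\deg(b)\le\deg(v)+\deg(x')-3$, and nothing in your argument guarantees that a non-adjacent cross pair with so small a degree sum exists; likewise the induction-plus-reattachment variant gives no argument that the $\deg(v)-1$ re-inserted edges can be placed without creating new unbalanced pairs. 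So the theorem, with the paper's definition of balanced (under which the pendant pair *is* unbalanced), is not established by your proposal. For fairness you should know that the paper's own proof has exactly the same hole: its assertion that $N(v_j)\ne\emptyset$ forces $v_i,v_j$ into the same part fails precisely when $N(v_j)=\{v_i\}$, so the paper really proves balancedness only in the weaker sense where the pair condition is $N(u)\subseteq N(v)$ with no exception for $v$. In short: you reproduce the paper's proof on every case the paper actually covers, and you honestly flag, but do not resolve, the one case both arguments miss.
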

 \begin{proof}
 {If $G$ is a balanced graph, then there is nothing to prove. So we assume that  $G$ is a graph which is not balanced.
 Then there exist $i,j\in[2n]$ such that ${\rm deg}(v_i)\ge {\rm deg}(v_j)+2$.  Furthermore, each neighbor of $v_j$ is a neighbor of $v_i$.
 Denote by $N(v_j)$ all neighbors of $v_j$.  If $N(v_j)=\emptyset$ then choose $G_0$ to be any bipartite balanced graph in $\mathcal{G}(2n,m)$.
 So assume that $N(v_j)\ne \emptyset$.  Hence, if $G=(V_1\cup V_2,E)$ is bipartite then $v_i,v_j\in V_p$ for some $p\in\{1,2\}$.
 Let $v_k$ be a neighbor  of $v_i$ which is not a neighbor of $v_j$.  Let $\mu',\mu(i,k), \mu(\{i,k\},\{j,l\}), v_l\in N(v_j)$ be the number
 of all perfect matchings of $G$ which do not contain the edge $(v_i,v_k)$, which contain the edge $(v_i,v_k)$, and which contain the edges $(v_i,v_k), (v_j,v_l)$.  So
 \[{\rm perfmat}\; G=\mu'+\mu(i,k), \quad \mu(i,k)=\sum_{v_l\in N(v_j)} \mu(\{i,k\},\{j,l\}).\]
 Let $G_1$ be a graph obtained by deleting the edge $(v_i,v_k)$ and adding the edge $(v_j,v_k)$.  Note that if $G$ is bipartite so is $G_1$.
 Clearly, any perfect matching in $G$ which does not contain $(v_i,v_k)$ is also a perfect matching in $G_1$.  Now to any perfect matching in $G$ containing
 the pairs $(v_i,v_k),(v_j,v_l)$ corresponds a unique perfect matching containing $(v_i,v_l),(v_j,v_k)$.  In view of the above equality for ${\rm perfmat}\; G$
 we deduce that ${\rm perfmat}\; G \le {\rm perfmat}\; G_1$.  If $G_1$ is balanced we are done.  If $G_1$ is not balanced we continue this process.
 Note when comparing the degree sequence of $G$ to $G_1$ we see that we changed only two degrees 
 ${\rm deg}_G(v_i)={\rm deg}_{G_1}(v_i) +1, {\rm deg}_G(v_j)={\rm deg}_{G_1}(v_j) -1$.  
In other words, the degree sequence of $G$ strictly majorizes the degree sequence of $G_1$.  Hence this process must stop at a balanced 
 graph $G_0\in {\cal G}(2n,m)$.  }
 \end{proof}
 \begin{cor} For any positive integers $n,m$, $m\le {2n \choose 2}$ there exists a  balanced graph $G^\star_{2n,m}$ that satisfies \eqref{defmu2nm}.
 \end{cor}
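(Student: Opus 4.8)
The plan is to obtain this statement as an immediate consequence of Theorem~\ref{blangthm}. First I would note that $\mathcal{G}(2n,m)$ is a finite set, so the maximum defining $\mu(2n,m)$ in \eqref{defmu2nm} is actually attained. Fix, then, any graph $G\in\mathcal{G}(2n,m)$ with ${\rm perfmat}\;G=\mu(2n,m)$, i.e. any maximizer guaranteed by finiteness.

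Next I would apply Theorem~\ref{blangthm} to this particular $G$. The theorem supplies a balanced graph $G_0\in\mathcal{BAL}(2n,m)$ with ${\rm perfmat}\;G\le{\rm perfmat}\;G_0$. Since $\mathcal{BAL}(2n,m)$ is by definition a subset of $\mathcal{G}(2n,m)$, the graph $G_0$ is itself an admissible competitor in the maximization \eqref{defmu2nm}, and hence ${\rm perfmat}\;G_0\le\mu(2n,m)={\rm perfmat}\;G$. Combining the two inequalities forces ${\rm perfmat}\;G_0=\mu(2n,m)$, so $G_0$ is a maximal graph that also happens to be balanced; setting $G^\star_{2n,m}:=G_0$ completes the argument.

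There is no genuine obstacle remaining at this point, since all the substance has already been absorbed into Theorem~\ref{blangthm}: its edge-switching step never decreases the number of perfect matchings while strictly majorizing the degree sequence, which is what guarantees termination inside $\mathcal{BAL}(2n,m)$. The only two things worth flagging explicitly are the appeal to finiteness of $\mathcal{G}(2n,m)$ to secure a maximizer in the first place, and the inclusion $\mathcal{BAL}(2n,m)\subseteq\mathcal{G}(2n,m)$, which is precisely what allows the maximality of $G$ to be transferred to the balanced graph $G_0$ produced by the theorem.
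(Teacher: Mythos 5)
Your proof is correct and matches the paper's intent exactly: the paper offers no separate proof of this corollary, treating it as an immediate consequence of Theorem~\ref{blangthm} (it even announces the theorem with the words ``the following theorem implies that \ldots there exists a balanced maximal $G^\star_{2n,m}$''), which is precisely your argument of applying the theorem to a maximizer and transferring maximality via the inclusion $\mathcal{BAL}(2n,m)\subseteq\mathcal{G}(2n,m)$. Your explicit remarks on finiteness of $\mathcal{G}(2n,m)$ and the two-sided inequality are sound and merely spell out what the paper leaves implicit.
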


 Denote by $\mathcal{BALD}(n,m)$ the subset of all balanced digraphs in ${\cal D}(n,m)$.
 \begin{pro} Let $n,m$ be two positive integers.  Assume that $n\le m\le n^2$.  Then
  $\nu(n,m)=\max_{D\in \mathcal{BALD}(n,m)} {\rm per}(A_D).$
 \end{pro}
 \begin{proof} {Let $D=D^\star_{n,m}$ be a maximal digraph satisfying \eqref{defnu2nm}.  Since $m\ge n$ we deduce that ${\rm per} (A_D)\ge 1$.
 Let $B(D)$ be the corresponding bipartite graph in ${\cal G}(2n,m)$.  So ${\rm perfmat}\; B(D)={\rm per} (A_D)\ge 1$.  Theorem \ref{blangthm}
 yields that there exists a balanced bipartite graph $G_0\in{\cal G}(2n,m)$ such that ${\rm perfmat}\; B(D)\le {\rm perfmat}\; G_0$.  Since $G_0$ has at least 
one perfect matching it follows that 
 each group of the vertices of $G_0$ has $n$ vertices.  Hence $G_0=B(D_0)$ for some $D_0\in {\cal BALD}(n,m)$.  So ${\rm per}(A_ {D_0})=\nu(n,m)$. 
 }
 
 \end{proof}
 \begin{rem}
By \eqref{perbr} there is a straightforward relation between the number of $2$-factors of a directed graph $D$ and the permanent of the adjacency matrix 
of its bipartite transformation $B(D)$. On the other hand, in bipartite graphs the permanent of the adjacency matrix is the square of the number of perfect 
matchings. Therefore, by using the previous corollary to find an appropriate orientation of the edges of a graph $G$ to obtain a directed graph $D(G)$ with 
maximum number of $2$-factors, it is enough to focus on directed balanced graphs obtained from $G$.
 \end{rem}

Let $r$ and $s$ be positive integers and consider the complete bipartite graph $K_{r,s}$ with bipartition $\{X,Y\}$, where $|X|=r$ and $|Y|=s$. 
A \textit{bipartite tournament} of size $r$ by $s$ is any directed graph obtained from $K_{r,s}$ by assigning a direction to each of its edges. 
Also, denote by $\mathcal{D}_{r,s}$ the set of all bipartite tournaments of size $r$ by $s$.
Observe that $\mathcal{D}_{n}$, which was defined in \S1, is equal to $\mathcal{D}_{n,n}$.
Let
\begin{equation}\label{defrhors}
\rho(r,s)=\max_{BT\in \mathcal{D}_{r,s}} {\rm per}(A_{BT})={\rm per} (A_{BT^\star}).
\end{equation}
For two matrices $A$ and $B$, by $A\bigoplus B$ we mean the following matrix $$\left[\begin{array}{ll}A&0\\0&B\end{array}\right].$$
Denote by  $J_n$ the $n\times n$ matrix with every entry equal to $1$.
Recall Minc's upper bound conjecture for $(0,1)$ matrices \cite{minc}, which was proved by Bregman \cite[Theorems 4-5]{breg73}.
\begin{thm}\label{Minc-Bregthm}
 Let $A$ be a $(0,1)$-matrix of order $n$ with row sum vector $R=(r_1,\dots,r_n)$. Then 
$${\rm per}(A)\leq \prod_{i=1}^{n}(r_i!)^{\frac{1}{r_i}}$$ Moreover, equality holds if and only if 
$A$ is a bipartite adjacency matrix of disjoint union of complete bipartite graphs.  That is, one can permute the rows and columns of $A$
to obtain a direct sum $J_{n_1}\bigoplus \dots\bigoplus J_{n_m}$, where $n_1+\cdots+n_m=n$.
\end{thm}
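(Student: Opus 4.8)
The plan is to prove the inequality by the information-theoretic (entropy) method, which yields the bound in a few lines, and then to read off the equality case by tracking where the entropy estimates are tight. Throughout write $p={\rm per}(A)$, and let $\mathcal{P}$ denote the set of permutations $\sigma$ of $[n]$ with $a_{i\sigma(i)}=1$ for every $i$, so that $|\mathcal{P}|=p$; we may assume $p\ge 1$, since otherwise the inequality is trivial. Let $\sigma$ be a uniformly chosen element of $\mathcal{P}$, so that its Shannon entropy is $H(\sigma)=\log p$ (the base is immaterial, as it cancels at the end). The goal is to bound $\log p$ above by $\sum_{i=1}^n \frac{1}{r_i}\log(r_i!)$, which on exponentiating is exactly the asserted inequality ${\rm per}(A)\le\prod_{i}(r_i!)^{1/r_i}$.

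First I would reveal the values of $\sigma$ one coordinate at a time in a random order. Choose $\pi\in S_n$ uniformly at random and independently of $\sigma$, and read off the $\sigma(i)$ in increasing order of $\pi(i)$. Since $\sigma$ and $\pi$ are independent, $H(\sigma)=H(\sigma\mid\pi)$, and the chain rule gives
\[
\log p=\sum_{i=1}^n H\bigl(\sigma(i)\,\big|\,\{\sigma(i'):\pi(i')<\pi(i)\},\,\pi\bigr).
\]
When row $i$ is processed, the admissible values of $\sigma(i)$ are the columns $j$ with $a_{ij}=1$ that have not already been used; call their number $D_i$. Conditioned on the past and on $\pi$, the variable $\sigma(i)$ is supported on these $D_i$ columns, so the $i$-th conditional entropy is at most $\mathbb{E}[\log D_i]$. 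The combinatorial heart of the argument is that for each fixed $\sigma$, as $\pi$ varies uniformly, $D_i$ is uniform on $\{1,2,\dots,r_i\}$: the $r_i$ ones of row $i$ correspond under $\sigma^{-1}$ to $r_i$ distinct rows (one of which is $i$ itself), a column stays admissible exactly when its $\sigma^{-1}$-row equals $i$ or comes later in $\pi$, and the rank of $i$ among those $r_i$ rows is uniform. Hence $\mathbb{E}[\log D_i]=\frac{1}{r_i}\sum_{k=1}^{r_i}\log k=\frac{1}{r_i}\log(r_i!)$, and summing over $i$ yields $\log p\le\sum_i\frac{1}{r_i}\log(r_i!)$, which is the Minc--Bregman bound.

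For the equality statement the easy direction is a direct computation: for $A=J_{n_1}\oplus\cdots\oplus J_{n_m}$ each block contributes ${\rm per}(J_{n_k})=n_k!$ and a factor $\bigl((n_k!)^{1/n_k}\bigr)^{n_k}=n_k!$ to the right-hand side, so both sides equal $\prod_k n_k!$. The hard part will be the converse. The only inequality used above is the bound of each conditional entropy by $\mathbb{E}[\log D_i]$; equality forces that, for every attainable history and every order $\pi$, the admissible value $\sigma(i)$ is \emph{uniformly} distributed over the $D_i$ available columns. I would argue that this stage-by-stage uniformity, imposed for all orders $\pi$ simultaneously, propagates rigidity through the support of $A$: it prevents any row from carrying an ``extra'' one that some but not all permutations avoid, and ultimately forces the rows and columns to split into groups within which every entry is $1$ and across which every entry is $0$, i.e.\ a permutation of $J_{n_1}\oplus\cdots\oplus J_{n_m}$. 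Making this propagation precise (most cleanly by induction on $n$, peeling off one fully-supported block at a time, or equivalently via Schrijver's convexity refinement of the same averaging) is the main obstacle, since one must exclude subtler near-regular configurations rather than merely match degree counts.
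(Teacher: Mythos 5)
First, a point of comparison: the paper does not prove this statement at all. It is quoted as the Minc--Bregman theorem, with both the inequality and the equality characterization cited from Bregman \cite{breg73}. So your proposal must stand entirely on its own. The inequality half does stand: it is exactly Radhakrishnan's entropy proof of Bregman's theorem. Your chain-rule setup, the bound of each conditional entropy by $\mathbb{E}[\log D_i]$ (legitimate because $D_i$ is a function of the conditioning data, namely the revealed values and $\pi$), and the key uniformity observation --- for fixed $\sigma\in\mathcal{P}$ the rows $\sigma^{-1}(j)$, with $j$ ranging over the ones of row $i$, are $r_i$ distinct rows one of which is $i$ itself, so the rank of $i$ among them under a uniform $\pi$ is uniform, hence $D_i$ is uniform on $\{1,\dots,r_i\}$ --- are all correct, and after Fubini they give ${\rm per}(A)\le\prod_{i=1}^{n}(r_i!)^{1/r_i}$.

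The genuine gap is the converse (``only if'') half of the equality case, which you explicitly leave open: you state that equality forces stage-by-stage conditional uniformity and then say that making this ``propagate'' to the block structure is ``the main obstacle.'' That obstacle is precisely the hard content of the equality statement, not a routine verification. Equality in the entropy bounds yields uniformity of $\sigma(i)$ on its admissible set only along histories of positive probability, and extracting from this that $A$ is permutation-equivalent to $J_{n_1}\bigoplus\cdots\bigoplus J_{n_m}$ requires a real combinatorial argument (this is why the equality case is attributed to Bregman's original proof, or to analyses that track Schrijver's convexity argument, rather than to the entropy proof, which in the literature is stated for the inequality only). The omission matters for this paper specifically: the equality characterization is exactly what the authors use in their determination of $\rho(n,n)$ for even $n$, where it shows that the extremal bipartite tournament is unique up to isomorphism, i.e.\ that $B$ can be permuted to $J_{n/2}\bigoplus J_{n/2}$. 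As written, your proposal proves the inequality but not the theorem; to complete it you must either carry out the rigidity/induction argument in full, or do what the paper does and cite Bregman for the equality case.
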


For a positive integer $n,$ let $K_{n,n}$ be the complete  bipartite graph with partite sets $X=\{x_1,\dots,x_{n}\}$ and $Y=\{y_1,\dots,y_{n}\}$. 
Observe that the adjacency matrix of any bipartite tournament $BT\in \mathcal{D}_n$ is of the form
\begin{equation}\label{ABTform}
A_{BT}=\left[\begin{array}{ll}0&B\\(J_n-B)^T&0\end{array}\right]
\end{equation}
Here $B$ is any $n\times n$ matrix with $(0,1)$ entries.  Clearly, 
$${\rm per}(A_{BT})={\rm per}(B){\rm per}(J-B)^T={\rm per}(B){\rm per}(J_n-B).$$

Assume that $n$ is even and $BT_0(n,n)$ is the bipartite tournament obtained from $K_{n,n}$ by assigning an orientation in such a way that 
the first half of players of $X$ wins over the first half of  players of $Y$ and the second half  of  players of $X$ wins over the second half of  
players of $Y$. Also the first half of players of $Y$ wins over the second half of  players of $X$ and the second half  of players of $Y$ wins 
over the first half of players of $X$. Obviously, the adjacency matrix of $BT_0$ is of the form \eqref{ABTform} where 
$B=J_{\frac{n}{2}}\bigoplus J_{\frac{n}{2}}.$ Thus we have ${\rm per}(A_{BT_0})=(\frac{n}{2})!^4.$

The following theorem determines completely all elements of $\mathcal{D}_n$ for which the permanent is equal to $\rho(n,n)$, in the case of even $n$.
\begin{thm}
For any positive even integer $n$  $\rho(n,n)=(\frac{n}{2})!^4$.  Furthermore, $\rho(n,n)={\rm per} (A_{BT^\star})$ if and only if $BT^\star(n,n)\simeq BT_0(n,n).$ 
\end{thm}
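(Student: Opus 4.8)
The plan is to prove the two assertions separately: first that $\rho(n,n)=(\frac{n}{2})!^4$ for even $n$, and second that this value is attained \emph{only} by the orientation $BT_0(n,n)$ (up to isomorphism). The starting point is the factorization ${\rm per}(A_{BT})={\rm per}(B)\,{\rm per}(J_n-B)$ recorded just above the statement, where $B$ is an arbitrary $n\times n$ $(0,1)$-matrix and $J_n-B$ is its complementary $(0,1)$-matrix. Thus the whole problem reduces to maximizing the product ${\rm per}(B)\,{\rm per}(J_n-B)$ over all $(0,1)$-matrices $B$, and the key tool will be the Minc--Bregman bound (Theorem~\ref{Minc-Bregthm}).

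First I would bound each factor by Minc--Bregman. Writing $r_1,\ldots,r_n$ for the row sums of $B$, the complementary matrix $J_n-B$ has row sums $n-r_1,\ldots,n-r_n$, so
\begin{equation}\label{prodbound}
{\rm per}(A_{BT})={\rm per}(B)\,{\rm per}(J_n-B)\le \prod_{i=1}^n (r_i!)^{\frac1{r_i}}\,(n-r_i)!^{\frac1{n-r_i}}.
\end{equation}
The aim is now to show the right-hand side is maximized termwise when each $r_i=\frac{n}{2}$, which would give the bound $\prod_{i=1}^n (\frac{n}{2})!^{\frac{4}{n}}=(\frac{n}{2})!^4$. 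This is exactly an instance of the fundamental inequality developed earlier: by Corollary~\ref{fundineqlem}, for fixed $p+q$ the product $(p!)^{1/p}(q!)^{1/q}$ is maximized when $p$ and $q$ are as equal as possible, and since here $p+q=n$ is even, the maximum of $(r_i!)^{1/r_i}(n-r_i)!^{1/(n-r_i)}$ over $r_i$ occurs uniquely at $r_i=n-r_i=\frac{n}{2}$. Applying this to every row gives the upper bound $\rho(n,n)\le(\frac{n}{2})!^4$, and since $BT_0$ achieves it, equality holds.

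The harder part is the characterization of equality, and this is where I expect the main obstacle to lie, because equality in \eqref{prodbound} requires chasing \emph{two} equality conditions simultaneously. Equality in the Minc--Bregman bounds forces both $B$ and $J_n-B$ to be (after independent row/column permutations) direct sums of all-ones blocks $J_{n_1}\oplus\cdots\oplus J_{n_k}$; and equality in Corollary~\ref{fundineqlem} forces every row sum to equal $\frac{n}{2}$, so every block $J_{n_j}$ appearing in $B$ must be square of size exactly $\frac{n}{2}$, whence $B$ is a permutation of $J_{n/2}\oplus J_{n/2}$. The delicate point is that $B$ and $J_n-B$ must \emph{simultaneously} be block-diagonal of this form: I would argue that if $B=P(J_{n/2}\oplus J_{n/2})Q$ for permutation matrices $P,Q$, then the constraint that $J_n-B$ also be a direct sum of two $J_{n/2}$ blocks pins down the structure up to relabeling, forcing $P,Q$ to respect a common partition of rows and of columns into two halves.

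Finally I would translate this matrix conclusion back into the language of bipartite tournaments. A matrix $B$ that is a permutation of $J_{n/2}\oplus J_{n/2}$, with complementary matrix also of that shape, corresponds precisely to a partition of $X$ into two halves and of $Y$ into two halves such that one half of $X$ beats one half of $Y$ and the other half of $X$ beats the other half of $Y$, which is the defining description of $BT_0(n,n)$. Hence any $BT^\star$ attaining $\rho(n,n)$ is isomorphic to $BT_0(n,n)$, completing the proof. The computation in \eqref{prodbound} and the two equality analyses are routine once set up; the genuine work is the combinatorial argument reconciling the block structures of $B$ and $J_n-B$.
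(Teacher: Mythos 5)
Your proposal is correct and follows essentially the same route as the paper's proof: factor ${\rm per}(A_{BT})={\rm per}(B)\,{\rm per}(J_n-B)$, bound each factor by Minc--Bregman (Theorem \ref{Minc-Bregthm}), use Corollary \ref{fundineqlem} to force every $r_i=n-r_i=\frac{n}{2}$, and then invoke the Minc--Bregman equality condition to pin down the block structure of $B$. The only remark worth making is that the ``delicate point'' you defer as the genuine work is in fact automatic, and this is exactly how the paper closes the argument: if $B=P(J_{n/2}\oplus J_{n/2})Q$ for permutation matrices $P,Q$, then since $PJ_nQ=J_n$ we get $J_n-B=P\bigl(J_n-(J_{n/2}\oplus J_{n/2})\bigr)Q$, and $J_n-(J_{n/2}\oplus J_{n/2})$ is itself a row-permuted copy of $J_{n/2}\oplus J_{n/2}$ with permanent $((\frac{n}{2})!)^2$, so no reconciliation of two independent block decompositions is needed --- the equality condition applied to $B$ alone already determines the tournament up to relabeling of $X$ and of $Y$, i.e.\ up to isomorphism with $BT_0(n,n)$.
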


\begin{proof}
{ Let $BT\in\mathcal{D}_n$.  So $A_{BT}$ is of the form \eqref{ABTform}.
 Let ${\rm deg}^+(x_i)=p_i$ and ${\rm deg}^-(x_i)=q_i$ for the vertex $x_i\in X$. Obviously, $p_i+q_i=n$, for every $i=1,\dots,n$.   
Clearly $(p_1,\dots,p_{n})$ and $(q_1,\dots,q_{n})$ are row sums of $B$ and $J-B$, respectively. So by the Minc-Bregman inequality  we have 
$${\rm per}(B){\rm per}(J_n-B)\leq \prod_{i=1}^{n}(p_i!)^{\frac{1}{p_i}}\prod_{i=1}^{n}(q_i!)^{\frac{1}{q_i}}.$$
Applying Corollary \ref{fundineqlem} for every $i=1,\ldots,n$ we find that
\[(p_i!)^{\frac{1}{p_i}}(q_i!)^{\frac{1}{q_i}}\le ((\frac{n}{2})!)^{\frac{4}{n}}.\]
Therefore
${\rm per}(A_{BT})=(p!)^{\frac{n}{p}}(q!)^{\frac{n}{q}}\leq  ((\frac{n}{2})!)^4$.  So $((\frac{n}{2})!)^4$ is an upper bound 
for $\rho(n,n)$. Again, by Corollary \ref{fundineqlem}, this upper bound can be achieved if and only if $p_i=q_i=\frac{n}{2}$ for $i=1,\ldots,n$.
Assume that $p_i=q_i=\frac{n}{2}$ for $i=1,\ldots,n$.   Minc-Bregman inequality yields that ${\rm per}(B)\le ((\frac{n}{2})!)^2$.
Equality holds if and only if $B$ is the bipartite adjacency matrix of disjoint union of complete bipartite graphs.
As the out degree of each vertex corresponding to $B$ is $\frac{n}{2}$, it follows that equality holds if and only if $B$ is the bipartite
adjacency matrix of the bipartite graph of $K_{\frac{n}{2}}\cup K_{\frac{n}{2}}$.  That is, one can permute the rows and the columns of $B$ to obtain
$J_{\frac{n}{2}}\bigoplus  J_{\frac{n}{2}}$.    Note that in this case ${\rm per}(J_n-B)=((\frac{n}{2})!)^2$.
 This completes the proof.}
 \end{proof}

Let $I_n$ be the identity matrix of order $n$. 
Denote by $D_n$ the number of derangements of $\{1,\ldots,n\}$.  That is $D_n$ is the number of all permutations $\sigma$ on  $\{1,\ldots,n\}$
such that $\sigma(i)\ne i, i=1,\ldots,n$.  It is well known that
$$D_n=n!\sum_{i=0}^n \frac{(-1)^i}{i!}\approx \frac{n!}{e} \quad.$$

\begin{pro}\label{comppernodd}  Let $n>1$ be odd and set $p=\frac{n-1}{2}$.  Denote by  $BT_1(n,n)$ and $BT_2(n,n)$ the tournaments corresponding to 
the matrices $A_{BT}$ of the form \eqref{ABTform}, where $B=J_p\oplus J_p\oplus J_1, and B=J_p\oplus (J_{p+1}-I_{p+1})$ respectively.  Then
\begin{equation}\label{comppernodd1} 
  {\rm per }( A_{BT_1(n,n)})=2p (p!)^4, \quad {\rm per }( A_{BT_2(n,n)})=(p+1)D_{p+1} (p!)^3.
\end{equation}
\end{pro}
\begin{proof}{  
Clearly,  $ {\rm per} (J_p)=p!$ and $ {\rm per} (J_{p+1}-I_{p+1})=D_{p+1}$.    Consider first ${\rm per}(A_{BT_1(n,n)})$.
Clearly, ${\rm per}(J_p\oplus J_p\oplus J_1)=(p!)^2$.   It is left to evaluate $ {\rm per} (C)$, where 
$$C=J_{2p+1}-(J_p\oplus J_p\oplus J_1)=\left[\begin{array}{ccc} 0& J_p&\1_p\\ J_p&0&\1_p\\\1_p^T&\1_p^T&0\end{array}\right].$$
Here $\1_p$ is a column vector whose all $p$ coordinates are $1$.  
Let $V_1:=\{1,\ldots,p\},V_2:=\{p+1,\ldots,n\}, V_2'=\{p+1,\ldots,n-1\}$.  For two subsets $U,W$ of $V_1\cup V_2$  we denote
by $C[U,W]$ the submatrix of $C$ with rows $U$ and columns $W$.

Let $W\subset V_1\cup V_2$ be of cardinality $p$.  Then  ${\rm per}(C[V_1,W])=0$
if $W\cap V_1\ne \emptyset$.   Laplace expansion of ${\rm per}(C)$ by the rows in $V_1$ yields:  
\begin{equation}\label{perCfor}
{\rm per}(C)=\sum_{i=p+1}^{n} {\rm per} (C[V_1,V_2\setminus\{i\}]){\rm per}(C[V_2,V_1\cup\{i\}]) .
\end{equation}
Clearly, ${\rm per} (C[V_1,V_2\setminus\{i\}])=p!$.
Observe next that 
$C[V_2,V_1\cup\{n\}]=J_{p+1}-(0\oplus J_1)$.  Expand ${\rm per}(C[V_2,V_1\cup\{n\}])$ by the last row to deduce ${\rm per}(C[V_2,V_1\cup\{n\}])=pp!$.
Consider now ${\rm per}(C[V_2,V_1\cup\{i\}]), i\in V_2'$.  Expand it by the last column to deduce that ${\rm per}(C[V_2,V_1\cup\{i\}])=p!$.  
This concludes the proof of the first equality in \eqref{comppernodd1}.

Consider now ${\rm per}(A_{BT_2(n,n)})$, where $A_{BT_2(n,n)}$ is of the form \eqref{ABTform} where $B=J_p\oplus (J_{p+1}-I_{p+1})$.
So ${\rm per}(B)=p!D_{p+1}$. 
It is left to evaluate $ {\rm per} (C)$, where 
$$C=J_{2p+1}-(J_p\oplus (J_{p+1}-I_{p+1}))=\left[\begin{array}{cc} 0& J_{p,p+1}\\ J_{p+1,p}&I_{p+1}\end{array}\right].$$
Here $J_{p,q}$ stands for the $p\times q$ matrix whose all entries are equal to $1$.  

As in the case of $BT_1(n,n)$ we deduce \eqref{perCfor}.
Clearly, ${\rm per} (C[V_1,V_2\setminus\{i\}])=p!$.  Expand  ${\rm per}(C[V_2,V_1\cup\{i\}])$
by the column $i$ to deduce that this permanent equals to $p!$.
This concludes the proof of the second equality in \eqref{comppernodd1}.
}
\end{proof}

Combine  Proposition \ref{comppernodd} and \eqref{muineq} to deduce for an odd $n>1$
\begin{equation}\label{rhobnd}
\max((n-1)(\frac{n-1}{2})!^4, \frac{n+1}{2} D_{ \frac{n+1}{2}}(\frac{n-1}{2})!^3)\leq \rho(n,n)\leq
 ((\frac{n-1}{2})!)^{\frac{2n}{n-1}}((\frac{n+1}{2})!)^{\frac{2n}{n+1}}.
\end{equation}
\\
Our computational work shows that the left hand side of the above inequality is sharp for $n=3,5,7$.  More precisely, for $n=5$ the maximal tournament is isomorphic
$BT_1(3,3)$.  For $n=3,7$ the maximal tournaments are either isomorphic to $BT_1(n,n)$ or $BT_2(n,n)$.  For an odd $n>7$ we have the inequality
$$(n-1)(\frac{n-1}{2})!^4< \frac{n+1}{2} D_{ \frac{n+1}{2}}(\frac{n-1}{2})!^3.$$
Stirling's formula yields that the ratio between the lower and the upper bounds in \eqref{rhobnd} for $n\gg 1$ is approximately $\frac{1}{e}$.
$$\frac{\frac{n+1}{2} D_{ \frac{n+1}{2}}(\frac{n-1}{2})!^3}{ ((\frac{n-1}{2})!)^{\frac{2n}{n-1}}((\frac{n+1}{2})!)^{\frac{2n}{n+1}}}\approx \frac{1}{e}.$$
\begin{con}
For an odd positive integer $n>7$, $\rho(n,n)= \frac{n+1}{2} D_{ \frac{n+1}{2}}(\frac{n-1}{2})!^3.$
\end{con}
\vspace{1cm}

\section{Some Computational Results and additional Observations}\label{comp}
Here we will first present some results on the maximum permanent and maximum number of perfect matchings for small tournaments and 
graphs with a small number of vertices, 
or with the number of edges close to the half of number of vertices 
\subsection{Small Tournaments}
Here we present some computational results on tournaments of order at most $10$. We generated all such a tournaments with an orderly 
algorithm and computed their permanent. Table \ref{tab1} shows the maximum value of the permanent function over tournaments of 
given order and the corresponding lower and upper bounds given by \eqref{lowtaubd} and
\begin{table}
\begin{center}
\begin{tabular}{|c|c|c|c|c|c|c|c|c|}
\hline
$n$ & 3 & 4 & 5 & 6 & 7 & 8 & 9 & 10 \\ \hline
$\tau(n)$ & 1 & 1 & 3 & 9 & 31 & 102 & 484 & 2350 \\ \hline
l.b.&1&1&1&1&1&13&17&255 \\ \hline
u.b.&1&2&5&17&62&272&1227&6602\\ \hline
\end{tabular}
\end{center}
 \caption{Values of $\tau(n)$ and bounds}
\label{tab1}
\end{table}

\subsection{Perfect matchings in small or sparse graphs}
For regular bipartite graphs the maximum number of perfect matchings, and matchings of all sizes, were computed in \cite{FKM}. Here we present 
the result of a similar computation for $n\leq 10$ vertices and each possible number of edges.  We also kept track of whether the extremal graphs 
were almost regular or not.

In Table 2 we present the maximum number of matchings in a graph with $n$ vertices and $m$ edges.  A number marked with a * marks 
a case where some of the extremal graphs are not almost regular and ** means that none of the extremal graphs are almost regular.

\begin{table}[htdp]
\begin{center}

{\footnotesize
\begin{tabular}{ccccc}
$m$ & $n=4$& $n=6$& $n=8$& $n=10$\\
 2 & 1 &   & \text{} & \text{} \\
 3 & 1 & 1 & \text{} & \text{} \\
 4 & 2 & 1 & 1 & \text{} \\
 5 & 2 & 2 & 1 & 1 \\
 6 & 3 & 2* & 2 & 1 \\
 7 & \text{} & 3* & 2* & 2 \\
 8 & \text{} & 4 & 4 & 2* \\
 9 & \text{} & 6 & 4* & 4 \\
 10 & \text{} & 6 & 6* & 4* \\
 11 & \text{} & 7 & 6* & 6* \\
 12 & \text{} & 8* & 9 & 8 \\
 13 & \text{} & 10 & 11 & 12 \\
 14 & \text{} & 12 & 14 & 12* \\
 15 & \text{} & 15 & 18 & 18 \\
 16 & \text{} & \text{} & 24 & 18* \\
 17 & \text{} & \text{} & 24 & 24** \\
 18 & \text{} & \text{} & 26 & 26 \\
 19 & \text{} & \text{} & 28* & 34 \\
 20 & \text{} & \text{} & 33 & 44 \\
 21 & \text{} & \text{} & 37 & 53 \\
 22 & \text{} & \text{} & 43 & 64 \\
 23 & \text{} & \text{} & 50 & 78 \\
 24 & \text{} & \text{} & 60 & 96 \\
 25 & \text{} & \text{} & 68 & 120 \\
 26 & \text{} & \text{} & 78 & 120 \\
 27 & \text{} & \text{} & 90 & 126 \\
 28 & \text{} & \text{} & 105 & 132* \\
 29 & \text{} & \text{} & \text{} & 145 \\
 30 & \text{} & \text{} & \text{} & 158** \\
 31 & \text{} & \text{} & \text{} & 178 \\
 32 & \text{} & \text{} & \text{} & 198 \\
 33 & \text{} & \text{} & \text{} & 225 \\
 34 & \text{} & \text{} & \text{} & 255 \\
 35 & \text{} & \text{} & \text{} & 295 \\
 36 & \text{} & \text{} & \text{} & 330 \\
 37 & \text{} & \text{} & \text{} & 372 \\
 38 & \text{} & \text{} & \text{} & 421 \\
 39 & \text{} & \text{} & \text{} & 478 \\
 40 & \text{} & \text{} & \text{} & 544 \\
 41 & \text{} & \text{} & \text{} & 604 \\
 42 & \text{} & \text{} & \text{} & 672 \\
 43 & \text{} & \text{} & \text{} & 750 \\
 44 & \text{} & \text{} & \text{} & 840 \\
 45 & \text{} & \text{} & \text{} & 945 \\
\end{tabular}
}
\caption{The maximum number of perfect matchings in graph with $n$ vertices and $m$ edges}	
\end{center}
\label{mtab}
\end{table}%

As we can see there are graphs with small  numbers of edges which are not almost regular, and this is a pattern that will persists for larger $n$ as well.  
In Figure \ref{fig1} and Figure \ref{fig2} 
we display the extremal graphs for $n=6$, $m=6$ and  $n=8$, $m=7$.   As we can see one can take the disjoint union of any of the three 6-vertex graphs 
and a 1-edge matching to form an extremal graph for $n=8$, $m=7$.  This pattern can be continued by using a larger matching:
\begin{thm}
	The extremal graphs on $6+2n$ vertices and $6+n$  edges are not all almost regular. 
\end{thm}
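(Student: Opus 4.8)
The plan is to produce, for every $n\ge 1$, a single graph on $6+2n$ vertices and $6+n$ edges that is simultaneously extremal (a maximizer of ${\rm perfmat}$) and not almost regular; exhibiting one such graph is enough to show that the extremal graphs are not all almost regular. Extremality will be certified by the upper bound $\mu(2N,m)\le\omega(2N,m)$ of Theorem \ref{matchthmin}, which for the relevant parameters evaluates to a number strictly below $3$, so that the integrality of ${\rm perfmat}$ pins the maximum to $2$.

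First I would fix the $6$-vertex building block. Take $H:=(K_4-e)\cup K_2$, the disjoint union of $K_4$ with one edge deleted and a single independent edge. Then $H$ has $6$ vertices, $6$ edges, degree sequence $(3,3,2,2,1,1)$, and, since perfect matchings of a disjoint union multiply while $K_4-e$ has exactly two perfect matchings, ${\rm perfmat}\;H=2$. As its maximum and minimum degrees differ by $2$, $H$ is not almost regular; this is exactly a graph accounting for the $*$ attached to the entry $m=6$, $n=6$ in Table 2. Next I would extend the block by a matching: set $G_n:=H\cup nK_2=(K_4-e)\cup(n+1)K_2$, so that $G_n$ has $6+2n$ vertices and $6+n$ edges. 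Using multiplicativity of ${\rm perfmat}$ over components together with ${\rm perfmat}\;K_2=1$, we get ${\rm perfmat}\;G_n={\rm perfmat}(K_4-e)=2$, and since $G_n$ still contains a vertex of degree $3$ and a vertex of degree $1$, it is not almost regular.

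The decisive step is to show $\mu(6+2n,6+n)=2$, i.e.\ that $G_n$ really is extremal. I would invoke Theorem \ref{matchthmin} with $N:=n+3$ and $m:=n+6$; the hypothesis $N\le m\le\binom{2N}{2}$ holds trivially. For $n\ge 1$ one has $1<m/N=(n+6)/(n+3)<2$, hence $\lfloor m/N\rfloor=1$, $\lceil m/N\rceil=2$ and $\alpha=m-N=3$, so that $\omega(6+2n,6+n)=(1!)^{n}(2!)^{3/2}=2\sqrt2$. Because $2\sqrt2<3$ and ${\rm perfmat}$ is integer-valued, $\mu(6+2n,6+n)\le 2$. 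Combined with the construction this gives $\mu(6+2n,6+n)=2$, attained by the non-almost-regular graph $G_n$, which proves the theorem.

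I do not expect a genuine obstacle. The only points requiring care are confirming that the floor and ceiling in $\omega$ are evaluated in the regime $1<m/N<2$ (so they equal $1$ and $2$), and that Theorem \ref{matchthmin} is applied under its stated hypothesis $N\le m$ rather than the stronger $m\ge 2N$ imposed when $\omega$ was first introduced. The whole argument hinges on the convenient numerical fact $\omega=2\sqrt2$, just below $3$, which is precisely what lets integrality of ${\rm perfmat}$ force the maximum down to $2$.
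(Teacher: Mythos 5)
Your proof is correct, but it follows a genuinely different route from the paper's. The paper argues structurally: an extremal graph on $6+2n$ vertices with $6+n$ edges must have a perfect matching, leaving only $3$ non-matching edges, hence at least $2n$ vertices of degree $1$; removing these reduces the graph to a $6$-vertex graph with at most $6$ edges, and the computational data of Table 2 (maximum value $2$ for six vertices and six edges, at most $1$ for fewer edges) then shows that the extremal graphs are exactly the disjoint unions of a $(6,6)$-extremal graph with a perfect matching, and some of those $(6,6)$-extremal graphs (Figure \ref{fig1}) are not almost regular. You instead certify extremality analytically: the explicit graph $(K_4-e)\cup(n+1)K_2$ has exactly $2$ perfect matchings and degree spread $2$, while Theorem \ref{matchthmin} gives $\mu(6+2n,6+n)\le\omega(6+2n,6+n)=2^{3/2}<3$, so integrality of ${\rm perfmat}$ forces $\mu(6+2n,6+n)\le 2$ and your graph is extremal. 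What each approach buys: the paper's yields a complete characterization of all extremal graphs, but its base case rests on the computer enumeration behind Table 2 and Figure \ref{fig1}; yours is self-contained modulo Theorem \ref{matchthmin}, works uniformly for all $n\ge 1$, and needs no computational input, at the cost of exhibiting only one extremal graph. The caveat you flag is handled correctly: Theorem \ref{matchthmin} is stated under the hypothesis $N\le m$, which covers your sparse regime $m<2N$, and its inequality $\mu\le\omega$ is indeed valid there, since an extremal graph attaining $\mu\ge 1$ has a perfect matching and hence minimum degree at least $1$, so the Alon--Friedland bound combined with Theorem \ref{fundprodin} applies; your conclusion $\mu=2$ is also consistent with the entries $2^{*}$ for $(8,7)$ and $(10,8)$ in Table 2.
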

 \begin{proof}{Use Table 2 to assume that $n\ge 1$.  Note that an extremal $G=(V,E)$ must have a perfect matching $M=(V,E')$, where $|E'|=3+n$.  
Hence $|E\setminus E'|=3$.  
These $3$ edges $E\setminus E'$ connect at most $6$ vertices.  Hence $G$ contains at least $2n$ vertices whose degrees are $1$. 
Let $V'\subset V$ be a subset of cardinality $2n$ of vertices of $G$ where the degree of each vertex is $1$.  
Let $H=(V_1,E_1)$, where $V_1=V\setminus V'$ be the induced subgraph of $G$ by $V_1$.  So $|V_1|=6$ and $|E_1|\le 6$.
Note that $|E_1|$ must be even.
So ${\rm perfmat} (G)={\rm perfmat} (H)$.  If $|E_1|\le 4$ then Table 1 yields that ${\rm perfmat} (H)\le 1$.
Hence $G$ is extremal if and only if $|E_1|=6$ and $H$ must be one of the extremal graphs for $n=6,m=6$.
So $G(V')$ is a perfect matching on $V'$ and $G$ is a disjoint union of $H$ and $G(V')$.}
\end{proof}
We can build similar families for some other numbers of edges when $m$ is close to $n/2$. This sets out the region where  
$m=\frac{n}{2}+c_1$, where $c_1$ is constant or $c_1=o(n)$ as one where the almost regularity property of extremal graphs 
can be expected to have an irregular behavior.

\begin{figure}[H]
	\begin{center}
		\includegraphics[width=0.7\textwidth]{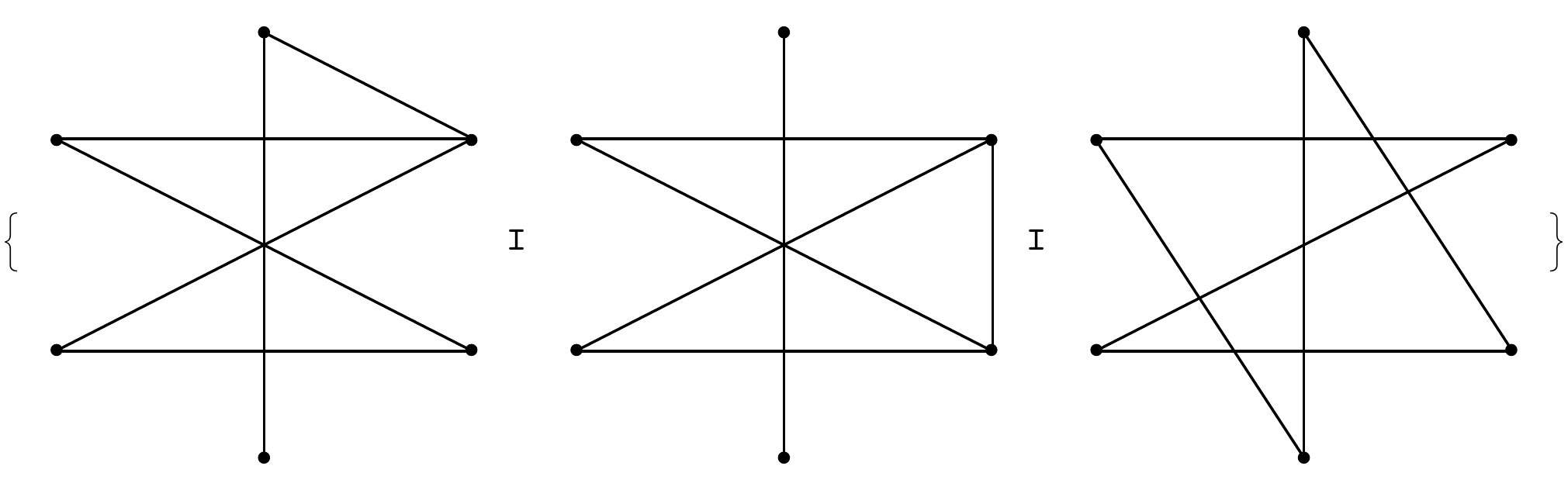}
		\caption{The graphs with maximum number of perfect matchings for $n=6$ and $m=6$ }
		\label{fig1}
	\end{center}
 \end{figure} 
\begin{figure}[H]
	\begin{center}
		\includegraphics[width=0.7\textwidth]{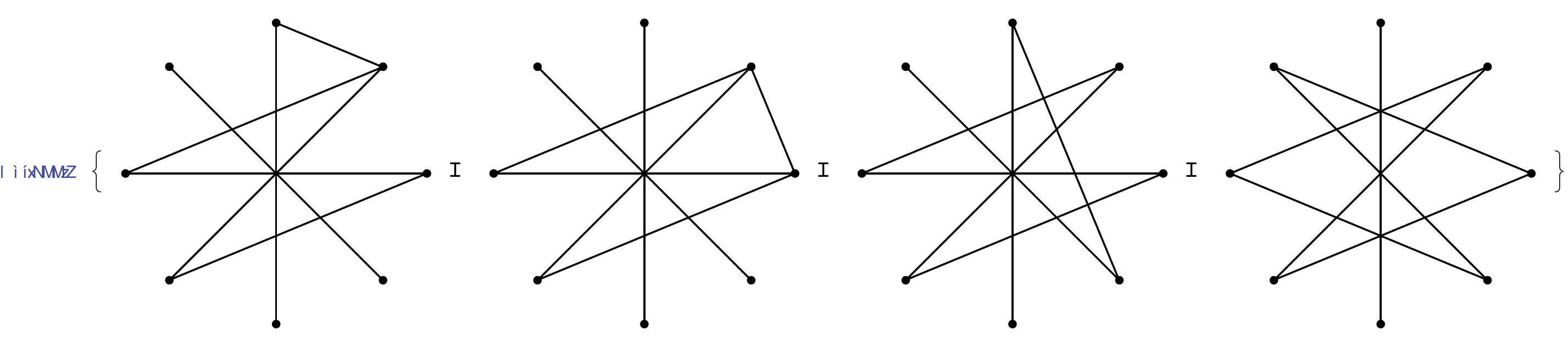}
		\caption{The graphs with maximum number of perfect matchings for $n=8$ and $m=7$ }
		\label{fig2}
	\end{center}
 \end{figure}

For $m=c_2{n \choose 2}$, i.e when the density of the graphs at hand is non-zero,  we would expect a smoother behaviour but as the example for 
$n=10$, $m=30$ shows there are still some surpirses here, at least for small $n$. That particular graph is displayed in Figure  \ref{fig3}.
\begin{figure}[H]
	\begin{center}
		\includegraphics[width=0.4\textwidth]{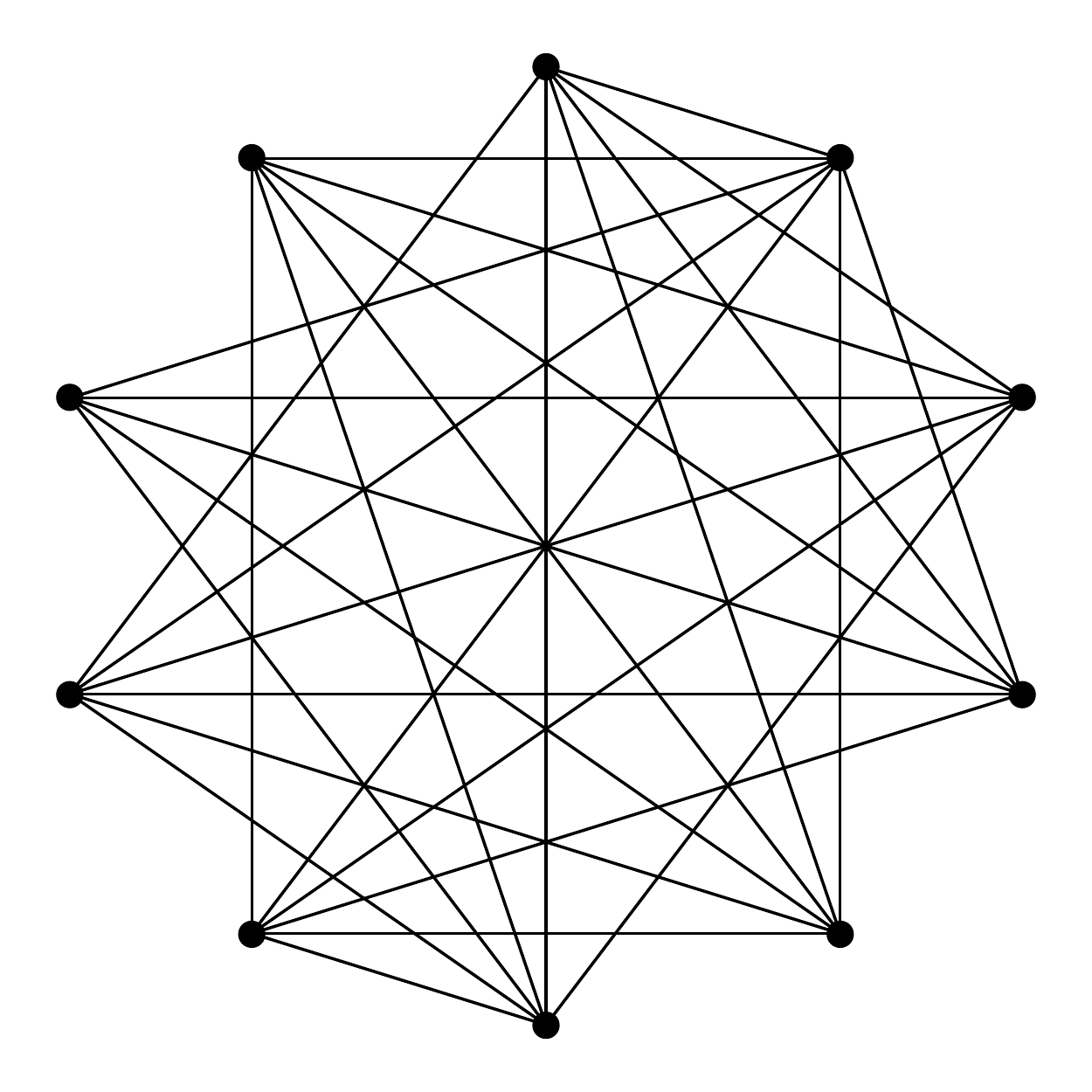}
		\caption{The graph with maximum number of perfect matchings for $n=10$ and $m=30$ }
		\label{fig3}
	\end{center}
 \end{figure} 
It is also interesting to plot the number of perfect matchings as a function of the number of edges. In Figure \ref{fig4} we have done this for $n=10$. As we can see there is a noticable change in the growth rate at $m=25$ and for the upper range of $m$ we have a convex function as well.
\begin{figure}[H]
	\begin{center}
		\includegraphics[width=0.7\textwidth]{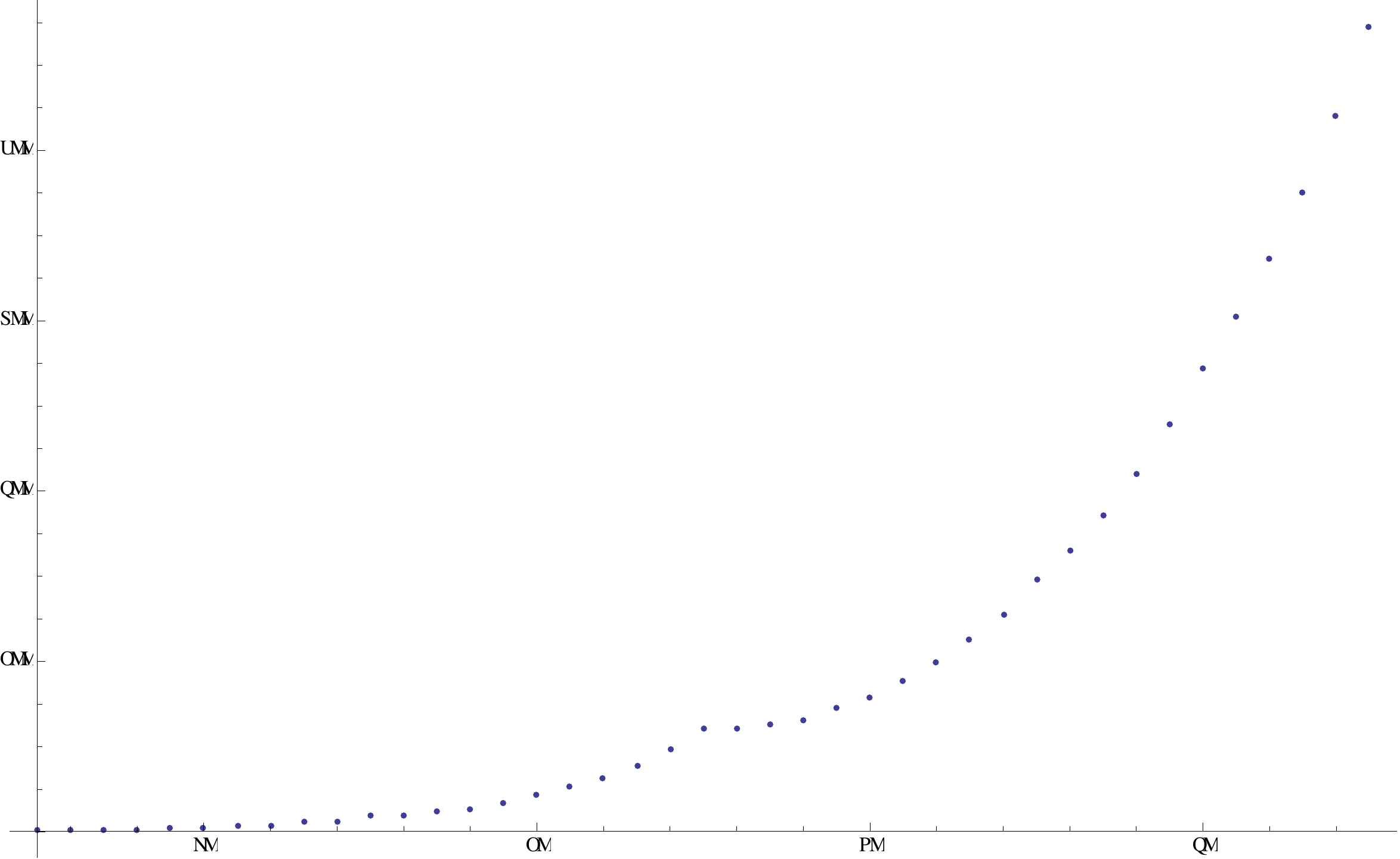}
		\caption{The maximum number of perfect matchings for $n=10$ as a function of $m$ }
		\label{fig4}
	\end{center}
 \end{figure} 

We conclude our paper with Figure \ref{fig5} that gives the ratio of the upper bound $\omega(2n,m)$, given in \eqref{defomegmn}, to the maximal number of matchings
$\mu(2n,m)$ for $n=10$ and $m\le 40$.

\begin{figure}[H]
	\begin{center}
		\includegraphics[width=0.7\textwidth]{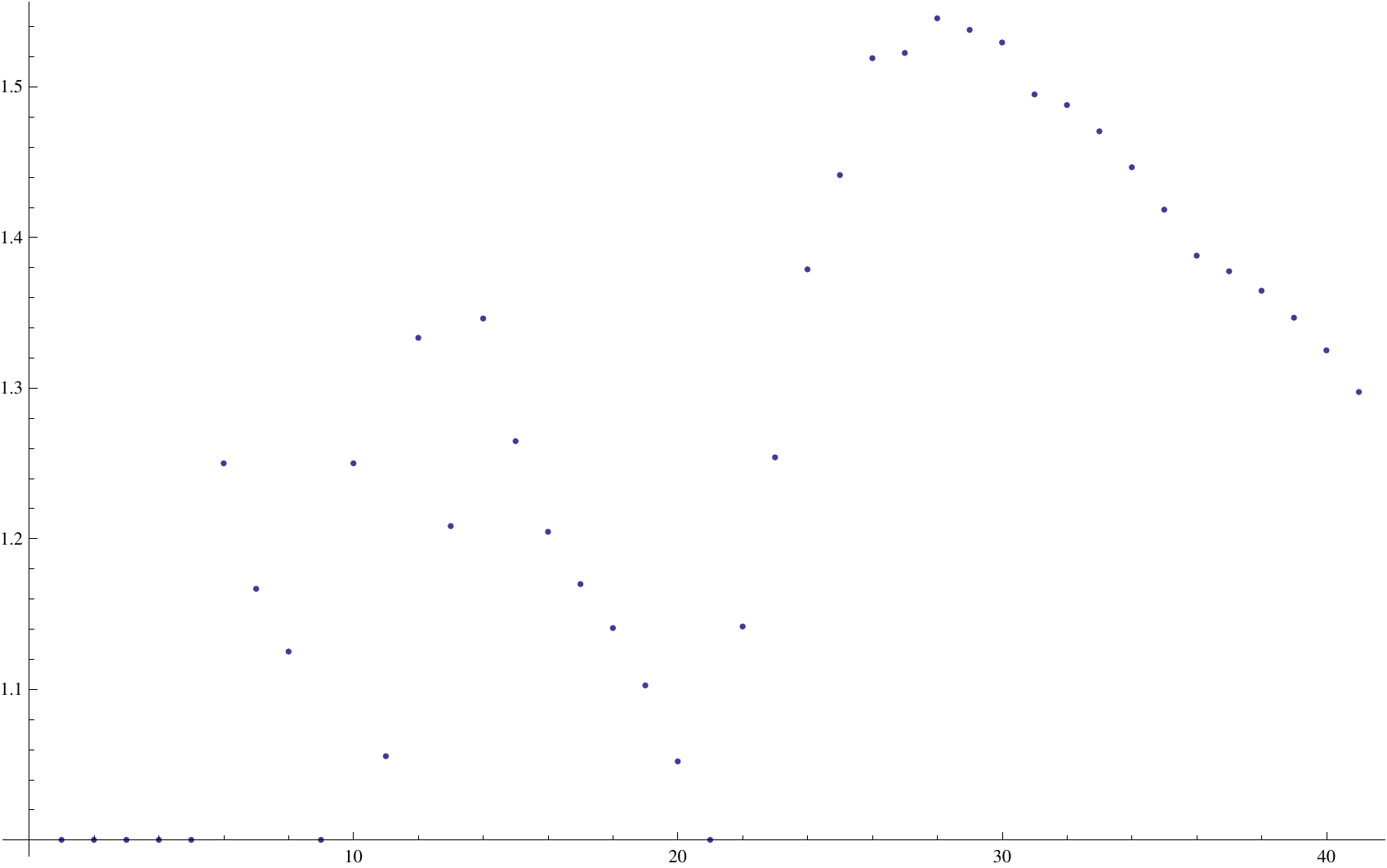}
		\caption{The ratio of the upper bound to the  maximum number of perfect matchings for $n=10$ as a function of $m$ }
		\label{fig5}
	\end{center}
 \end{figure} 

\noindent
\textbf{Acknowledgemens}

\noindent
 We thank Ian Wanless for useful comments.



\end{document}